\DeclareMathOperator{\gr}{gr}
\DeclareMathOperator{\End}{End}
\DeclareMathOperator{\Hom}{Hom}
\DeclareMathOperator{\rad}{rad}
\DeclareMathOperator{\id}{id}
\DeclareMathOperator{\im}{im}
\DeclareMathOperator{\length}{length}
\DeclareMathOperator{\Dir}{Dir}
\DeclareMathOperator{\GL}{GL}
\DeclareMathOperator{\diag}{diag}
\DeclareMathOperator{\M}{M}
\DeclareMathOperator{\Prolif}{Prolif}
\DeclareMathOperator{\Spec}{Spec}
\theoremstyle{plain}
\newtheorem{theorem}{Theorem}[section]
\newtheorem{theorem-definition}{Theorem-Definition}[section]
\newtheorem{corollary}[theorem]{Corollary}
\newtheorem{lemma}[theorem]{Lemma}
\newtheorem{proposition}[theorem]{Proposition}
\theoremstyle{definition}
\newtheorem{proposition-definition}[theorem]{Proposition-Definition}
\newtheorem{example}[theorem]{Example}
\theoremstyle{remark}
\newtheorem{remark}[theorem]{Remark}
\tikzset{%
  symbol/.style={
    draw=none,
    every to/.append style={
      edge node={node [sloped, allow upside down, auto=false]{$#1$}}
    },
  },
}
\newcommand\email[2][]%
   {\newaffiltrue\let\AB@blk@and\AB@pand
      \if\relax#1\relax\def\AB@note{\AB@thenote}\else\def\AB@note{\relax}%
        \setcounter{Maxaffil}{0}\fi
      \begingroup
        \let\protect\@unexpandable@protect
        \def\thanks{\protect\thanks}\def\footnote{\protect\footnote}%
        \@temptokena=\expandafter{\AB@authors}%
        {\def\\{\protect\\\protect\Affilfont}\xdef\AB@temp{#2}}%
         \xdef\AB@authors{\the\@temptokena\AB@las\AB@au@str
         \protect\\[\affilsep]\protect\Affilfont\AB@temp}%
         \gdef\AB@las{}\gdef\AB@au@str{}%
        {\def\\{, \ignorespaces}\xdef\AB@temp{#2}}%
        \@temptokena=\expandafter{\AB@affillist}%
        \xdef\AB@affillist{\the\@temptokena \AB@affilsep
          \AB@affilnote{}\protect\Affilfont\AB@temp}%
      \endgroup
       \let\AB@affilsep\AB@affilsepx
}
\title{Bushnell-Reiner zeta functions over two-dimensional semilocal rings}
\author{Sean B. Lynch}
\begin{document}

\maketitle

\section{Introduction}

Orders over discrete valuation rings are classical objects in algebraic number theory \cite{Rei03} with a rich theory of zeta functions \cite{br_survey}. Recall that discrete valuation rings are nothing but one-dimensional regular local rings. Over two-dimensional regular local rings, orders are not as well understood. See \cite{Ramras1969,Artin1986,reiten&bergh,chan&ingalls_invent,chan&ingalls_ant} for some key developments. Zeta functions over such two-dimensional orders were first studied in the author's thesis. In this work, we streamline the method and yield more general local applications. In another direction, Daniel Chan and the author \cite{chan2024zetafunctionsorderssurfaces} investigated global applications of a special local formula (Corollary \ref{cor_hom_slice}).

Let $R$ be a ring and $M$ be a left $R$-module. If, for each positive integer $n$, there is only a finite number $a_{M,n}$ of submodules of $M$ with index $n$, then we may form the Dirichlet series $\zeta(M;s)=\sum_{n=1}^\infty a_{M,n} n^{-s}$. This is called the \textit{Solomon zeta function of $M$}. Motivated by integral representation theory, Solomon studied the case where $M$ is a lattice over a $\mathbb{Z}_p$-order $R$ \cite{Sol_77_first,Sol_79_second}. After proving Solomon's first conjecture \cite{Bushnell1980}, Bushnell and Reiner studied related $L$-functions and zeta functions \cite{bushnell&reiner_crelle_1,bushnell&reiner_crelle_2,bushnell82,bushnell84,bushnell86}, finally turning to the \textit{Bushnell-Reiner zeta function} $Z(M;z)$ \cite{Bushnell1987}. This zeta function is more refined and categorical than its Solomonian counterpart. A similarly refined and categorical zeta function appears in Iyama's proof of Solomon's second conjecture \cite{Iya03}. In Section \ref{section_foundations}, we construct Bushnell-Reiner zeta functions over \textit{left arithmetical rings} on Knopfmacher's foundation of abstract Dirichlet algebras associated to arithmetical semigroups \cite{knopfmacher_abstract}. We use the Grothendieck group of the category of finite length left modules over left arithmetical rings to construct the requisite arithmetical semigroups and recover Solomon zeta functions by functoriality.

Solomon's method relies heavily on \textit{partial zeta functions} $\zeta(M,N;s)$, which enumerate only those finite index submodules of $M$ that are isomorphic to a given module $N$. We similarly define their Bushnell-Reiner counterparts $Z(M,N;z)$ and use them extensively in our own method. 

We take an invertible ideal $I$ in the Jacobson radical of a left arithmetical ring $R$, and we take any finitely generated projective left $R$-module $M$. Then we consider the $I$-adic filtration on each finite length quotient $M/X$, and we study $Z(M;z)$ via the associated graded modules $\gr_I(M/X)$. Under the assumption that the finite colength submodules of $M/IM$ are all projective over $R/I$, we present our \textit{proliferation formula} in Section \ref{section_prolif}. This expresses $Z(M;z)$ as a (possibly infinite) sum of infinite products of $Z(P_{j+1},P_j;-)$ for finite colength submodules $P_j\subseteq P_{j+1}\subseteq M/IM$. In particular, when the finite colength submodules of $M/IM$ are all isomorphic, the proliferation formula expresses $Z(M;z)$ as an infinite product of $Z(M/IM;-)$. These results are stated in Theorem \ref{thm_prolif} and Corollary \ref{corollary_single_sliver}.

In later sections, we explore applications of the proliferation formula to modules over two-dimensional semilocal orders. Before moving on to these, we collect some tangential applications at the end of Section \ref{section_prolif}. In Example \ref{example_voll}, we give a decomposition formula for one-dimensional zeta functions that has proved useful, at least in some special cases (c.f. \cite[E.g.~2.20]{Voll}); our work in Section \ref{section_rump} may be seen as a two-dimensional analogue. In Example \ref{example_rossmann}, we recover Rossmann's formula for the Solomon zeta function enumerating finite index ideals of the formal power series ring $\mathbb{Z}\llbracket t\rrbracket$.

In Appendix \ref{section_hey}, we prove an abstract version of Hey's formula \cite{Hey27} by adapting a method of Solomon \cite[\S.~2--3]{Sol_77_first}. Then, in Section \ref{section_lift_hey}, we combine our abstract Hey formula with our proliferation formula to obtain Theorem \ref{thm_lifted_hey}. This is an explicit infinite product formula for a Bushnell-Reiner zeta function in terms of Artin-Wedderburn data. It is surprising how explicit this formula is, given how abstract our assumptions are. Under additional assumptions, we obtain a pleasant formula for the Solomon zeta function in Corollary \ref{cor_hom_slice}. As previously stated, Daniel Chan and the author explore global applications of this specialised formula in \cite{chan2024zetafunctionsorderssurfaces}. Moreover, in Example \ref{example_lustig}, we see that Corollary \ref{cor_hom_slice} recovers Lustig's classical formula for the Solomon zeta function enumerating finite index ideals of a commutative two-dimensional regular local ring with finite residue field.

We consider Rump's (left) \textit{two-dimensional regular semiperfect rings} \cite{rump_reg} in Section \ref{section_rump}. On the one hand, their definition plays nicely with the conditions of our proliferation formula. On the other hand, Rump classified such rings in terms of rank two \textit{almost regular valuations} \cite[Theorem~4.1]{rump_reg} and showed that they are orders of global dimension two in (Artinian) semisimple rings \cite[Proposition~1.4 and Theorem~4.2]{rump_reg}. In a two-dimensional regular semiperfect ring $R$, we may choose an invertible ideal $I$ in the Jacobson radical such that $R/I$ is a one-dimensional regular semiperfect ring, so we have proliferation formulae involving partial zeta functions over $R/I$. Michler classified the one-dimensional regular semiperfect rings as certain hereditary orders in semisimple rings \cite{Michler1969} (see \cite[Proposition~1.6]{rump_reg}), and they look just like the classical hereditary orders \cite[\S.~39]{Rei03} that they generalise.  

In Appendix \ref{appendix_brz_hereditary}, we compute the requisite partial Bushnell-Reiner zeta functions over one-dimensional regular semiperfect rings. Bushnell and Reiner computed partial Solomon zeta functions over classical hereditary orders in quaternion algebras and remarked that their calculations became cumbersome over classical hereditary orders in higher-dimensional algebras \cite{bushnell&reiner_texas}. Denert found explicit combinatorial formulae over all such orders \cite{denert,denert_second}, a special case of which inspired work on \textit{Denert's permutation statistic} \cite{foata&zeilberger,carnevale_denert}. Our formulae are, at least \textit{a priori}, more general and refined than Denert's. Moreover, we express partial Bushnell-Reiner zeta functions over hereditary orders in terms of polynomials associated to filtered finite-dimensional vector spaces over finite fields. These filtered vector spaces occur naturally in the construction of hereditary orders as chain orders, a construction which is foundational in Bushnell's work on representations of $\GL_n$ over $p$-adic fields \cite[\S.~1]{bushnell_super} (see also the earlier joint work \cite[\S.~1.2]{bushnell&frohlich_paper}).

 Let $R$ be a commutative Noetherian semilocal ring whose residue fields are all finite. Segal showed that if $M$ is a finitely generated $R$-module, then $\zeta(M;s)$ converges at some complex number $s$ if and only if the Krull dimension $\dim(M)\le 2$ \cite[Thm.~1]{Segal1997}. Earlier results of Berndt \cite{Ber69} and Witt (published at the end of \cite{Ber69}) give this dichotomy in the special case when $M={}_R R$ and $R$ is local (see \cite{Ber03} for an English summary). Segal's result extends to the following noncommutative setting. If $\Lambda$ is an $R$-algebra such that the modules ${}_R \Lambda$ and ${}_\Lambda M$ are both finitely generated, then $\zeta({}_\Lambda M;s)$ converges at some complex number $s$ if and only if $\dim({}_R M)\le 2$. Moreover, it follows from the Bushnell-Reiner-Solomon theorem \cite{Bushnell1980} that the Solomon zeta functions of orders in dimension one extend meromorphically to the entire complex plane. In dimension two, however, these zeta functions may have natural boundary! Such analytic results are covered in the author's PhD thesis; they suggest that the two-dimensional case is special.
 
\textbf{Acknowledgments}. We thank Daniel Chan and Ivan Fesenko for many helpful conversations. We also thank an anonymous referee for suggesting some references.

\section{Bushnell-Reiner zeta functions}\label{section_foundations}

Let $R$ be a \textit{left arithmetical ring}. That is, $R$ has each of the following properties:
\begin{enumerate}[(a)]
    \item every simple left $R$-module has finitely many elements;
    \item every simple left $R$-module is finitely presented;
    \item for every positive integer $n$, there are only finitely many (up to isomorphism) simple left $R$-modules with $n$ elements.
\end{enumerate}

The Grothendieck group $\tilde{G}$ of finite length left $R$-modules is free abelian on the set $\mathcal{S}$ of isomorphism classes of simple left $R$-modules. Let $G\subset \tilde{G}$ be the free commutative monoid on $\mathcal{S}$. We write $\tilde{G}$, and thus also $G$, multiplicatively.

Let $M$ be a finitely generated left $R$-module. For any $i\in\mathcal{S}$, we claim that there are only finitely many submodules $X$ of $M$ with $M/X\in i$. This is because the number of such $X$ is bounded above by $|\Hom(M,S)|$, where $S\in i$ is any representative, which is finite by (a). Then, by (b), we see that any maximal submodule of $M$ is finitely generated. Thus, it follows by induction on the number of composition factors that, for any $g\in G$, there are at most finitely many finite colength submodules $X$ of $M$ with $g=[M/X]\in G$.

Now, for each $i\in\mathcal{S}$, let $\mathscr{N}(i)=|S|$ for any $S\in i$. Then $\mathscr{N}$ extends uniquely to a homomorphism from $G$ to the multiplicative monoid of positive integers $\mathbb{N}$. By (c), $(G,\mathscr{N})$ is an \textit{arithmetical semigroup}, in the sense of Knopmacher \cite[\S.~1.1]{knopfmacher_abstract}. As such, we may associate to it a \textit{Dirichlet algebra} $\Dir$ \cite[\S.~2.1]{knopfmacher_abstract}, which is the completion of the monoid algebra $\mathbb{C}[G]$ with respect to a nonarchimedean valuation induced by $\mathscr{N}$ \cite[Chapter~2, Proposition~1.1]{knopfmacher_abstract}. In Knopfmacher's terminology, an infinite series or product of elements in $\Dir$ is \textit{pseudo-convergent} if it is convergent with respect to the nonarchimedean valuation (so as to distinguish this formal convergence from the usual analytic convergence of Dirichlet series). The previous paragraph implies that we have a pseudo-convergent series 
$$Z(M)=\sum_X [M/X]\in\Dir$$
where the sum is over all finite colength submodules $X$ of $M$. We call this the \textit{Bushnell-Reiner zeta function of $M$} \cite{Bushnell1987}. When $\mathcal{S}$ is finite, $\Dir$ is isomorphic to the ring of formal power series $\mathbb{C}\llbracket z_1,\ldots,z_{|\mathcal{S}|}\rrbracket$ with the $(z_1,\ldots,z_{|\mathcal{S}|})$-adic topology. Bushnell and Reiner defined their zeta functions in this finite setting.

To discuss functoriality, we now write $\mathcal{S}=\mathcal{S}(R)$, $G=G(R)$, $\mathscr{N}=\mathscr{N}_{R}$, and $\Dir=\Dir(R)$. Let $A$ be another left arithmetical ring for which we have a ring homomorphism $A\to R$. So, we have two arithmetical semigroups $(G(A),\mathscr{N}_{A})$ and $(G(R),\mathscr{N}_{R})$. For each $i\in\mathcal{S}(R)$, let $\mathscr{N}_{R/A}(i)=[S]\in G(A)$ for any $S\in i$. Then $\mathscr{N}_{R/A}$ extends uniquely to a monoid homomorphism $G(R)\to G(A)$ satisfying $\mathscr{N}_{R}=\mathscr{N}_{A}\circ\mathscr{N}_{R/A}$. In this sense, $\mathscr{N}_{R/A}:(G(R),\mathscr{N}_{R})\to (G(A),\mathscr{N}_{A})$ is a \textit{morphism of arithmetical semigroups}. This, in turn, induces a topological ring homomorphism $\Dir(\mathscr{N}_{R/A}):\Dir(R)\to\Dir(A)$. Then we see that $R\mapsto \Dir(R)$ gives a contravariant functor from the category of left arithmetical rings, with ring homomorphisms, to the category of topological rings. 

Note that $\Dir(\mathbb{Z})$ is just the ordinary formal Dirichlet algebra. So, in particular, we have a topological ring homomorphism $\Dir(\mathscr{N}_{R/\mathbb{Z}}):\Dir(R)\to\Dir(\mathbb{Z})$ that takes the Bushnell-Reiner zeta function of $M$ to the \textit{Solomon zeta function of $M$} \cite{Sol_77_first}.

For another finitely generated left $R$-module $N$, we have the \textit{partial Bushnell-Reiner zeta function} 
$$Z(M,N)=\sum_{X\simeq N} [M/X]\in\Dir(R)$$
where the sum is over all finite colength submodules $X$ of $M$ such that $X\simeq N$. We may similarly recover the \textit{partial Solomon zeta function}. Note that the (partial) Bushnell-Reiner zeta function of $M$ (and $N$) depends only on the isomorphism class of $M$ (and of $N$).

The class of left arithmetical rings is closed under finite direct products and factors. Bushnell-Reiner and Solomon zeta functions, partial or not, are all compatible with such decompositions in the obvious way (c.f. \cite[Lemma~1]{Sol_77_first} and Remark \ref{remark_finite_products}). The class of left arithmetical rings is also closed under Morita equivalence. 

\begin{proposition}
    Let $R'$ be any ring that is Morita equivalent to $R$. Then $R'$ is a left arithmetical ring. 
\end{proposition}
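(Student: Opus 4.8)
The plan is to transport the defining conditions (a)--(c) across the Morita equivalence by means of its standard bimodule description, the one real subtlety being that a Morita equivalence does \emph{not} preserve the cardinality of a module (e.g.\ $\mathbb{F}_p$ and $\M_n(\mathbb{F}_p)$ are Morita equivalent, but their simple modules have $p$ and $p^n$ elements). So I would fix an $(R,R')$-bimodule $P$ and an $(R',R)$-bimodule $Q$, each finitely generated projective on every side, together with bimodule isomorphisms $Q \otimes_R P \cong R'$ and $P \otimes_{R'} Q \cong R$; then $F = Q \otimes_R (-)\colon R\lmod \to R'\lmod$ and $G = P \otimes_{R'} (-)$ are mutually inverse equivalences. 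In particular $F$ carries simple modules to simple modules, likewise $G$ in the other direction, and $F$ induces a bijection from the set of isomorphism classes of simple left $R$-modules to that of simple left $R'$-modules. Condition (b) for $R'$ is then immediate: being finitely presented is intrinsic to the module category (a module is finitely presented iff it is finitely generated and the kernel of some, equivalently every, surjection onto it from a finitely generated projective module is finitely generated), hence preserved by $F$, and every simple left $R'$-module is $F(S)$ for some simple left $R$-module $S$, which is finitely presented by (b) for $R$.

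For (a), I would pick $n$ with $Q$ a direct summand of $R^{\oplus n}$ as a right $R$-module. Tensoring a splitting $R^{\oplus n} \cong Q \oplus Q''$ over $R$ with a simple left $R$-module $S$ realises $F(S) = Q \otimes_R S$ as a direct summand, as an abelian group, of $R^{\oplus n} \otimes_R S \cong S^{\oplus n}$, so $|F(S)| \le |S|^{n} < \infty$ by (a) for $R$. Since every simple left $R'$-module is of the form $F(S)$, this gives (a) for $R'$.

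For (c), the same computation applied to $G$ --- choosing $m$ with $P$ a direct summand of $(R')^{\oplus m}$ as a right $R'$-module --- gives $|G(S')| \le |S'|^{m}$ for every simple left $R'$-module $S'$. Fix a positive integer $n'$. If $S'$ is a simple left $R'$-module with $n'$ elements, then $G(S')$ is a simple left $R$-module with at most $(n')^{m}$ elements; by (c) for $R$, applied to each of the finitely many values $k \le (n')^m$, only finitely many isomorphism classes of $R$-modules arise this way, and since $G$ is injective on isomorphism classes, only finitely many isomorphism classes of simple left $R'$-modules have $n'$ elements. Thus (c) holds for $R'$. The arguments in the last two paragraphs are short; the point to get right is that the exponents $n$ and $m$, coming from the projective ranks of $Q$ and $P$, are uniform over all simple modules, so nothing goes wrong with the finiteness conditions (a) and (c).
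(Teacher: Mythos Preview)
Your proof is correct and follows essentially the same approach as the paper. The paper phrases the equivalence via $\Hom_R(P,-)$ for a progenerator $P$ (so that $|S'|=|\Hom_R(P,S)|$) and obtains the reverse bound from a surjection $P^{\oplus m}\twoheadrightarrow R$, whereas you phrase it via the tensor functors $Q\otimes_R-$ and $P\otimes_{R'}-$ and obtain the bounds by splitting off from free modules; the resulting inequalities $|S'|\le|S|^{n}$ and $|S|\le|S'|^{m}$ are the same, and the rest of the argument is identical.
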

\begin{proof}
    Under this equivalence, every simple left $R'$-module $S'$ corresponds to a simple left $R$-module $S$, and ${}_{R'}R'$ corresponds to a progenerator ${}_R P$. We have $|S'|=|\Hom_{R'}(R',S')|=|\Hom_R(P,S)|$, so $R'$ satisfies property (a) because $R$ does. We also readily see that $R'$ satisfies property (b) because $R$ does. Now, let's take a left $R$-module epimorphism $P^{\oplus m}\twoheadrightarrow R$ for some positive integer $m$. Then $|S|^{\frac1{m}}=|\Hom_R(R,S)|^{\frac1{m}}\le|\Hom_{R}(P,S)|=|S'|$. Since the correspondence $S'\mapsto S$ is bijective on isomorphism classes, $R'$ satisfies property (c) because $R$ does.
\end{proof}

Bushnell-Reiner zeta functions, partial or not, are compatible with Morita equivalence in an obvious way; Solomon zeta functions sometimes offer such compatibility (c.f. \cite[Theorem~2.1]{Wittmann2004}). 

Recall that a ring $A$ with Jacobson radical $J$ is \textit{semilocal} if $A/J$ is Artinian (semisimple).

\begin{proposition}
    Let $A$ be a ring with Jacobson radical $J$. Then $A$ is left arithmetical and semilocal if and only if $A/J$ is finite and ${}_A J$ is finitely generated.
\end{proposition}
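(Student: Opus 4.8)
The plan is to prove the two implications separately, handling the three defining conditions (a), (b), (c) of a left arithmetical ring one at a time against the semilocal condition. The organising observation is that $J=\rad(A)$ annihilates every simple left $A$-module, so the simple left $A$-modules coincide with the simple left $A/J$-modules; and whenever $A/J$ is semisimple Artinian it is in particular left Noetherian, so that submodules of finitely generated left $A/J$-modules are again finitely generated (and hence finitely generated over $A$).

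For the forward implication, assume $A$ is left arithmetical and semilocal. Writing $A/J\cong\prod_{i=1}^k\M_{n_i}(D_i)$ with the $D_i$ division rings, the simple left modules are the column spaces $D_i^{n_i}$; property (a) forces each $D_i$ to be finite, and as there are only $k<\infty$ of them with each $n_i<\infty$, the ring $A/J$ is finite. For finite generation of ${}_A J$: the left regular module $A/J$ is a finite direct sum of simple left $A/J$-modules, each of which is a simple left $A$-module and hence finitely presented over $A$ by (b); a finite direct sum of finitely presented modules is finitely presented, so ${}_A(A/J)$ is finitely presented. Applying the standard lemma that in a short exact sequence $0\to K\to F\to M\to 0$ with $F$ finitely generated and $M$ finitely presented the kernel $K$ is finitely generated --- to the sequence $0\to J\to A\to A/J\to 0$ --- shows ${}_A J$ is finitely generated. (Property (c) is not needed here: once there are only finitely many simple left $A$-modules it holds automatically.)

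For the converse, assume $A/J$ is finite and ${}_A J$ is finitely generated. Since $\rad(A/J)=0$ and $A/J$ is Artinian (being finite), $A/J$ is semisimple Artinian, so $A$ is semilocal. The simple left $A$-modules are the simple left $A/J$-modules, hence all finite (this is (a)) and finite in number (so (c) is immediate). The only substantive point is (b): given a simple left $A$-module $S\cong A/\mathfrak{m}$ with $\mathfrak{m}$ a maximal left ideal, we have $J\subseteq\mathfrak{m}$, and $\mathfrak{m}/J$ is a left ideal of the left Noetherian ring $A/J$, hence finitely generated over $A/J$ and therefore over $A$; combining this with the hypothesis that ${}_A J$ is finitely generated, the sequence $0\to J\to\mathfrak{m}\to\mathfrak{m}/J\to 0$ gives that ${}_A\mathfrak{m}$ is finitely generated, so $S=A/\mathfrak{m}$ is finitely presented.

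I expect the only real obstacle to be property (b), and in both directions it is dispatched by the same circle of ideas --- the interplay of finite generation, finite presentation, and short exact sequences --- with the forward direction invoking the Schanuel-type lemma on $0\to J\to A\to A/J\to 0$ and the converse invoking left-Noetherianity of the semisimple Artinian ring $A/J$. Everything else follows immediately from the Wedderburn--Artin structure of $A/J$ together with the fact that simple modules over $A$ and over $A/J$ are the same.
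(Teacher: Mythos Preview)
Your proof is correct and follows essentially the same route as the paper's: both directions hinge on the observation that simple left $A$-modules coincide with simple left $A/J$-modules, that $A/J$ is a finite direct sum of simples when $A$ is semilocal, and that the finite-presentation lemma applied to $0\to J\to A\to A/J\to 0$ transfers finite presentation of $A/J$ to finite generation of $J$. The only minor variation is in the converse argument for (b): the paper notes that each simple is a direct summand of the finitely presented module ${}_A(A/J)$ and hence is itself finitely presented, whereas you build $\mathfrak{m}$ directly from $J$ and $\mathfrak{m}/J$ --- but this is a cosmetic difference, not a genuinely different approach.
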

\begin{proof}
    Suppose that $A/J$ is finite and ${}_A J$ is finitely generated. Then $A$ is semilocal, and thus also satisfies property (c). Since every simple left $A$-module is isomorphic to a direct summand of ${}_{A}A/J$, we see that $A$ satisfies properties (a) and (b). 

    Conversely, suppose that $A$ is left arithmetical and semilocal. Then ${}_A A/J$ is a finite direct sum of simples, and is thus finite by (a). By (b), ${}_A A/J$ is finitely presented, and hence ${}_A J$ is finitely generated. 
\end{proof}

\section{Proliferation formula}\label{section_prolif}

Let $R$ be a left arithmetical ring and $M$ be a finitely generated projective left $R$-module. 

Recall that an ideal $I$ of $R$ is \textit{invertible} if $I\otimes_R-$ gives an auto-equivalence on the category of left $R$-modules (this is actually left-right symmetric). Then we have an overring $\tilde{R}\supseteq R$ obtained by inverting all of the invertible ideals of $R$ \cite{rump_cm} (see also \cite{Ste75}). Similarly, we have a left $\tilde{R}$-module $\tilde{M}\supseteq M$. As such, for any $R$-submodule $X\subseteq M$, any nonnegative integer $j$ and any invertible ideal $I$ of $R$, we have an $R$-submodule $I^{-j}X\subseteq\tilde{M}$.

\begin{itemize}
\item Suppose that there exists an invertible ideal $I$ of $R$, containing the Jacobson radical $J$ of $R$, such that the finite colength submodules of $M/IM$ are all projective over $R/I$.
\end{itemize}

Since $I\subseteq J$, we naturally identify the isomorphism classes $\mathcal{S}(R/I)=\mathcal{S}(R)$ of simple left modules as well as the norms $\mathscr{N}_{R/I}=\mathscr{N}_R$ on them. Accordingly, the associated free commutative monoids $G(R/I)=G(R)$ and Dirichlet algebras $\Dir(R/I)=\Dir(R)$ are also naturally identified. For simplicity, we write these objects as $\mathcal{S},\mathscr{N},G,\Dir$. Note that $I\otimes_R-$ induces an automorphism of the Grothendieck group $\tilde{G}$ of the category of finite length left $R$-modules, which restricts to a monoid automorphism of $G$, and further restricts to a permutation on $\mathcal{S}$.

\begin{proposition}\label{prop_graded}
    Let $X$ be a submodule of $M$. For each $j$, let
    $$Y_j=\frac{(M\cap I^{-j}X)+IM}{IM}.$$
    Then the $j$-th piece of the $I$-adic associated graded module $\gr_I(M/X)$ is isomorphic to 
    $$I^j\otimes_R \frac{M/IM}{Y_j}.$$
\end{proposition}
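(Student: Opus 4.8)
The plan is to compute the $j$-th graded piece $I^j(M/X)/I^{j+1}(M/X)$ by hand and then transport it through the auto-equivalence $I^{-j}\otimes_R-$. First I would record that $I^j(M/X)=(I^jM+X)/X$, where $I^jM$ and $I^{j+1}M$ are honest submodules of $M$ since $I\subseteq R$; hence the $j$-th graded piece is the subquotient $(I^jM+X)/(I^{j+1}M+X)$ of $M/X$. Because $I^{j+1}M\subseteq I^jM$, the second isomorphism theorem gives $(I^jM+X)/(I^{j+1}M+X)\cong I^jM/\bigl(I^jM\cap(I^{j+1}M+X)\bigr)$, and the modular law—applicable precisely because $I^{j+1}M\subseteq I^jM$—rewrites the denominator as $I^{j+1}M+(I^jM\cap X)$. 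So the $j$-th graded piece is isomorphic to $I^jM/\bigl(I^{j+1}M+(I^jM\cap X)\bigr)$.

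Next I would apply the auto-equivalence $I^{-j}\otimes_R-$ (quasi-inverse to $I^j\otimes_R-$). The point is that, since $I$ is invertible, $I^{-j}\otimes_R-$ is exact and, for a submodule $A\subseteq\tilde M$, identifies $I^{-j}\otimes_R A$ with the submodule $I^{-j}A\subseteq\tilde M$: this uses $I^{-j}I^j=R$ together with $I^{-j}\otimes_R\tilde M\cong\tilde M$ (as $I$ acts invertibly on $\tilde R$, hence on $\tilde M$). Exactness then lets me pull $I^{-j}$ through sums, quotients, and intersections of submodules of $\tilde M$. Applying this to $I^jM/\bigl(I^{j+1}M+(I^jM\cap X)\bigr)$ yields $M/\bigl(IM+(M\cap I^{-j}X)\bigr)$, where I have used $I^{-j}(I^jM)=M$, $I^{-j}(I^{j+1}M)=IM$, and $I^{-j}(I^jM\cap X)=I^{-j}(I^jM)\cap I^{-j}X=M\cap I^{-j}X$. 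By the third isomorphism theorem this last module is exactly $(M/IM)/Y_j$.

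Combining the two steps gives $I^{-j}\otimes_R\bigl(\text{$j$-th graded piece of }\gr_I(M/X)\bigr)\cong(M/IM)/Y_j$, and applying the quasi-inverse $I^j\otimes_R-$ produces the asserted isomorphism of the $j$-th piece with $I^j\otimes_R\bigl((M/IM)/Y_j\bigr)$. I expect the only delicate point to be the bookkeeping around the auto-equivalence: one must check carefully that $I^{\pm j}\otimes_R-$ corresponds, compatibly, to the submodule operation $A\mapsto I^{\pm j}A$ inside $\tilde M$—in particular that it commutes with the intersection $I^jM\cap X$—which is exactly where invertibility of $I$ (hence flatness and exactness) enters; the remainder is just the isomorphism theorems and the modular law, and in fact does not use projectivity of $M$.
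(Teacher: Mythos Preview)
Your proof is correct and follows essentially the same route as the paper's: both compute the graded piece as $I^jM/\bigl(I^jM\cap(X+I^{j+1}M)\bigr)$ via the second isomorphism theorem, simplify the denominator to $I^{j+1}M+(I^jM\cap X)$ (the paper folds this modular-law step together with the invertibility step into a single equality $I^jM\cap(X+I^{j+1}M)=I^j\bigl((M\cap I^{-j}X)+IM\bigr)$), and then invoke the auto-equivalence. The only cosmetic difference is that the paper passes directly from $I^jM/I^jN$ to $I^j\otimes_R(M/N)$, whereas you apply $I^{-j}\otimes_R-$ first and then reapply $I^j\otimes_R-$; these are equivalent maneuvers.
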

\begin{proof}
\begin{align*}
    \frac{I^j(M/X)}{I^{j+1}(M/X)}&=\frac{(X+I^jM)/X}{(X+I^{j+1}M)/X}\cong\frac{X+I^jM}{X+I^{j+1}M}=\frac{(X+I^{j+1}M)+I^jM}{X+I^{j+1}M}\cong\frac{I^jM}{I^jM\cap(X+I^{j+1}M)}\\&=\frac{I^jM}{I^j((M\cap I^{-j}X)+IM)}\cong I^j\otimes_R \frac{M}{(M\cap I^{-j}X)+IM}\cong I^j\otimes_R \frac{M/IM}{Y_j}.
\end{align*}
\end{proof}

Then $X\mapsto Y_0\subseteq Y_1\subseteq\cdots$ defines a function $\mathscr{F}$ from the set $\mathcal{X}$ of finite colength submodules of $M$ to the set $\mathcal{Y}$ of chains of finite colength submodules of $M/IM$ that eventually stabilise to $M/IM$. On $\mathcal{X}$, we have the operator $\mathscr{R}:X\mapsto M\cap I^{-1}X$. On $\mathcal{Y}$, we have the shift operator $\mathscr{S}:Y_0\subseteq Y_1\subseteq\cdots\mapsto Y_1\subseteq Y_2\subseteq\cdots$. They satisfy $\mathscr{F}\mathscr{R}=\mathscr{S}\mathscr{F}$. We shall also write $\bar{Y}_j=Y_{j+1}/Y_j$ for each $j$.

\begin{proposition}\label{prop_fundamental}
We have
\begin{align}\label{fundamental_formula}
   \sum_{X\in\mathscr{F}^{-1}Y}[M/X]=\prod_{j=0}^{\infty}|\Hom(Y_j,\bar{Y}_j)|^{-1}\prod_{k=0}^j |\Hom(Y_{j-k},I^{k}\otimes \bar{Y}_j)|[I^k\otimes \bar{Y}_j]\in\Dir 
\end{align}
for each $Y\in\mathcal{Y}$.
\end{proposition}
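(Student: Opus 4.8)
The plan is to reduce (\ref{fundamental_formula}) to a purely numerical identity and then prove that identity by induction on the length of the chain $Y$, using the shift relation $\mathscr{F}\mathscr{R}=\mathscr{S}\mathscr{F}$. First, by Proposition \ref{prop_graded} every $X\in\mathscr{F}^{-1}Y$ satisfies $\gr_I(M/X)_j\cong I^j\otimes_R(M/IM)/Y_j$; since $(M/IM)/Y_j$ has a filtration with successive quotients $\bar{Y}_j,\bar{Y}_{j+1},\dots$ and $[M/X]=\sum_j[\gr_I(M/X)_j]$ in $G$, we get $[M/X]=\prod_{j\ge0}\prod_{k=0}^{j}[I^k\otimes\bar{Y}_j]$, the same monomial for every $X$ in the fibre. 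On the right of (\ref{fundamental_formula}), cancel the coefficient $|\Hom(Y_j,\bar{Y}_j)|$ of the $k=0$ term against the leading $|\Hom(Y_j,\bar{Y}_j)|^{-1}$ and reindex by $l=j-k$; the product becomes $c(Y)\cdot\prod_{j\ge0}\prod_{k=0}^{j}[I^k\otimes\bar{Y}_j]$ with $c(Y):=\prod_{0\le l<j}|\Hom(Y_l,I^{j-l}\otimes\bar{Y}_j)|$. Comparing coefficients, (\ref{fundamental_formula}) is equivalent to $|\mathscr{F}^{-1}Y|=c(Y)$.

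I prove $|\mathscr{F}^{-1}Y|=c(Y)$ by induction on $n(Y):=\min\{n:Y_n=M/IM\}$, which is finite because $Y\in\mathcal{Y}$. If $n(Y)=0$ then $Y$ is constant at $M/IM$, so $\mathscr{F}^{-1}Y=\{M\}$ by Nakayama's lemma and $c(Y)=1$ since every $\bar{Y}_j=0$. For the inductive step, $n(\mathscr{S}Y)=n(Y)-1$ and a direct reindexing (using $(\mathscr{S}Y)_l=Y_{l+1}$) gives $c(Y)=c(\mathscr{S}Y)\cdot\prod_{j\ge1}|\Hom(Y_0,I^j\otimes\bar{Y}_j)|$. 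Since $\mathscr{F}\mathscr{R}=\mathscr{S}\mathscr{F}$, the operator $\mathscr{R}$ restricts to a map $\mathscr{F}^{-1}Y\to\mathscr{F}^{-1}\mathscr{S}Y$, so by the inductive hypothesis it suffices to show this map is surjective with every fibre of cardinality $\prod_{j\ge1}|\Hom(Y_0,I^j\otimes\bar{Y}_j)|$.

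Fix $X_1\in\mathscr{F}^{-1}\mathscr{S}Y$ and set $X_2=\mathscr{R}X_1$. Using invertibility of $I$ one checks that $\mathscr{R}X=X_1$ is equivalent to the conjunction of $IX_1\subseteq X\subseteq X_1$ and $X\cap IM=IX_1$, and that for such $X$ the chain $\mathscr{F}(X)$ automatically agrees with $Y$ in every degree $\ge 1$; hence the fibre over $X_1$ is $\{X:IX_1\subseteq X\subseteq X_1,\ X\cap IM=IX_1,\ \pi(X)=Y_0\}$ with $\pi\colon M\to M/IM$. Passing to submodules $\bar{X}\subseteq X_1/IX_1$ and writing $q\colon X_1/IX_1\twoheadrightarrow\pi(X_1)=Y_1$ for the canonical surjection (so $\ker q=A:=(X_1\cap IM)/IX_1$), these three conditions say exactly that $\bar{X}$ is a complement of $A$ inside $q^{-1}(Y_0)$. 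Because $Y_0=\pi(X)$ is a finite-colength submodule of $M/IM$, it is projective over $R/I$ by hypothesis, so $0\to A\to q^{-1}(Y_0)\to Y_0\to0$ splits, the complements are in bijection with $\Hom_{R/I}(Y_0,A)$, and — again by projectivity of $Y_0$ — the finite number $|\Hom_{R/I}(Y_0,A)|$ depends only on the class $[A]\in G$.

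It remains to identify $[A]$. Since $X_1\cap IM=IX_2$ we have $A\cong I\otimes_R(X_2/X_1)$. Filtering $X_1\subseteq X_2$ by $X_i\cap I^\bullet M$, the induced filtration on $X_1$ and the quotient filtration on $X_2/X_1$ are compatible, so the associated graded is exact on $0\to X_1\to X_2\to X_2/X_1\to0$ and $\gr(X_2/X_1)\cong\gr(X_2)/\gr(X_1)$; by a computation parallel to that of Proposition \ref{prop_graded}, the $i$-th pieces of $\gr(X_1)$ and $\gr(X_2)$ are $I^i\otimes Y_{i+1}$ and $I^i\otimes Y_{i+2}$, so $[X_2/X_1]=\sum_{i\ge0}[I^i\otimes\bar{Y}_{i+1}]$ and $[A]=\sum_{j\ge1}[I^j\otimes\bar{Y}_j]$, independent of $X_1$. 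Hence every fibre of $\mathscr{R}\colon\mathscr{F}^{-1}Y\to\mathscr{F}^{-1}\mathscr{S}Y$ has size $\prod_{j\ge1}|\Hom_{R/I}(Y_0,I^j\otimes\bar{Y}_j)|\ge1$; in particular $\mathscr{R}$ is surjective, so the inductive step is complete. The main obstacle is this last step: pinning down $[A]$ exactly (rather than merely bounding it) requires the exactness of $\gr$ on $0\to X_1\to X_2\to X_2/X_1\to0$, which is where the filtration bookkeeping — a modular-law diagram chase — is most delicate; a close second is the reduction in the third paragraph of ``$\mathscr{R}X=X_1$'' to the three clean conditions on $X$, which leans throughout on the invertibility of $I$.
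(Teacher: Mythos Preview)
Your proof is correct and follows essentially the same strategy as the paper: induction on the stabilisation index of $Y$ via the relation $\mathscr{F}\mathscr{R}=\mathscr{S}\mathscr{F}$, with the inductive step reduced to counting the fibres of $\mathscr{R}\colon\mathscr{F}^{-1}Y\to\mathscr{F}^{-1}\mathscr{S}Y$ using projectivity of $Y_0$ and identifying $[(X_1\cap IM)/IX_1]$ in $G$. Two minor organisational differences: you first strip off the common monomial $[M/X]$ to reduce (\ref{fundamental_formula}) to the bare counting identity $|\mathscr{F}^{-1}Y|=c(Y)$, whereas the paper carries the monomial through (its Lemma~\ref{lemma1} computes $[\mathscr{R}X/X]$ instead); and you compute $[A]$ by filtering $X_2/X_1$ and invoking exactness of $\gr$, while the paper does a direct Grothendieck-group manipulation---both land on $[A]=\prod_{j\ge1}[I^j\otimes\bar{Y}_j]$.
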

\begin{proof}
Note that $Y_n=M/IM$ for some $n$. We proceed by induction on $n$.

When $n=0$, Nakayama's lemma tells us that $\mathscr{F}^{-1}Y=\{M\}$, and so $\sum_{X\in\mathscr{F}^{-1}Y}[M/X]=[0]=1\in\Dir$. Let's now assume the induction hypothesis:
\begin{align}\label{induction_hypothesis}
    \text{formula (\ref{fundamental_formula}) holds for each $Y\in\mathcal{Y}$ with $Y_n=M/IM$.}
\end{align}
Then we take any $Y\in\mathcal{Y}$ with $Y_{n+1}=M/IM$. To utilise our induction hypothesis, we need two lemmata, the second of which is more involved than the first.

\begin{lemma}\label{lemma1}
   $[\mathscr{R}X/X]=\prod_{k=0}^{\infty}[I^k\otimes \bar{Y}_k]\in G$ for every $X\in \mathscr{F}^{-1}Y$.
\end{lemma}
\begin{proof}
    By Proposition \ref{prop_graded}, $$[M/X]=\prod_{j=0}^\infty \left[I^j\otimes \frac{M/IM}{Y_j}\right]=\prod_{j=0}^\infty \prod_{k=j}^\infty \left[I^j\otimes \bar{Y}_k\right]=\prod_{k=0}^\infty\prod_{j=0}^k[I^j\otimes \bar{Y}_k].$$
    Similarly, 
    $$[M/\mathscr{R}X]=\prod_{k=0}^{\infty}\prod_{j=0}^k[I^j\otimes \bar{Y}_{k+1}]=\prod_{k=1}^\infty\prod_{j=0}^{k-1}[I^j\otimes \bar{Y}_k].$$
    Hence, we get the desired result by canceling common factors.
\end{proof}

\begin{lemma}\label{lemma2}
$\mathscr{R}$ restricts to a function $\mathscr{F}^{-1}Y\to\mathscr{F}^{-1}\{\mathscr{S}Y\}$, each fibre of which contains $$|\Hom(Y_0,\bar{Y}_0)|^{-1}\prod_{j=0}^\infty|\Hom(Y_0,I^j\otimes \bar{Y}_j)|$$ elements.
\end{lemma}
\begin{proof}
This restricted function is well defined because $\mathscr{F}\mathscr{R}=\mathscr{S}\mathscr{F}$. Let $X'\in \mathscr{F}^{-1}\{\mathscr{S}Y\}$. Then $X$ is in the fibre of $X'$ if and only if $X$ is a finite colength submodule of $M$ satisfying $X'=M\cap I^{-1}X$ and $\frac{X+IM}{IM}=Y_0$. In fact, from the calculation
\begin{align*}
    \length(M/X)&=\length\left(\frac{M}{X+IM}\right)+\length\left(\frac{X+IM}{X}\right)\\ &=\length\left(\frac{M/IM}{(X+IM)/IM}\right)+\length\left(I\otimes \frac{M}{M\cap I^{-1}X}\right),
\end{align*}
we see that the finite colength condition is superfluous. Moreover, from the condition $X'=M\cap I^{-1}X$, we see that if $X$ is in the fibre of $X'$, then $X$ is a submodule of $X'$ containing $IX'$. Therefore, $X\mapsto X/IX'$ gives an injection from the fibre of $X'$ to $\{\text{submodules of $X'/IX'$}\}$, and so it suffices to count its image $\mathcal{I}$.

To this end, we compute that $$\frac{X'/IX'}{(X'\cap IM)/IX'}\cong\frac{X'}{X'\cap IM}\cong\frac{X'+IM}{IM}=Y_1.$$
Therefore, we have a canonical short exact sequence of left $R/I$-modules,
$$0\to\frac{X'\cap IM}{IX'}\to\frac{X'}{IX'}\to Y_1\to 0.$$
Now, let $X$ be an arbitrary submodule of $X'$ containing $IX'$, so that $\frac{X}{IX'}$ is a submodule of the middle term. Its preimage on the left is equal to $\frac{X\cap IM}{IX'}$, and its image on the right is equal to $\frac{X+IM}{IM}$. Hence, $\mathcal{I}$ consists of those submodules of $\frac{X'}{IX'}$ with trivial preimage on the left and image $Y_0$ on the right. Let $\iota:Y_0\hookrightarrow Y_1$ be the canonical embedding. Then $\tilde{\iota}\mapsto\im(\tilde{\iota})$ defines a bijection from $\{\text{lifts of $\iota$}\}$ to $\mathcal{I}$, and so it suffices to count $\{\text{lifts of $\iota$}\}$.
\[\begin{tikzcd}
	0 & {\frac{X'\cap IM}{IX'}} & {\frac{X'}{IX'}} & {Y_1} & 0 \\
	&&& {Y_0}
	\arrow[from=1-1, to=1-2]
	\arrow[from=1-2, to=1-3]
	\arrow[from=1-3, to=1-4]
	\arrow[from=1-4, to=1-5]
	\arrow["{\tilde{\iota}}", dotted, from=2-4, to=1-3]
	\arrow["\iota"', hook', from=2-4, to=1-4]
\end{tikzcd}\]

Since $Y_0$ is a finite colength submodule of $M/IM$, it is projective over $R/I$ by assumption. Applying the exact functor $\Hom(Y_0,-)$ to our short exact sequence, we see that $|\{\text{lifts of $\iota$}\}|=|\Hom(Y_0,\frac{X'\cap IM}{IX'})|$. Note that $|\Hom(Y_0,-)|$ defines a function from the category of finite-length left $R/I$-modules to $\mathbb{N}$, which descends to a monoid homomorphism $G\to\mathbb{N}$, which, in turn, extends to a group homomorphism $\tilde{G}\to\mathbb{Q}^\times$. So, we compute that 
\begin{align*}
    \left[\frac{X'\cap IM}{IX'}\right]&=\left[\frac{IM}{IX'}\right]\left[\frac{IM}{X'\cap IM}\right]^{-1}=\left[I\otimes\frac{M}{X'}\right]\left[\frac{X'+IM}{X'}\right]^{-1}=\left[I\otimes\frac{M}{X'}\right]\left[\frac{M}{X'}\right]^{-1}\left[\frac{M}{X'+IM}\right]
\end{align*}
in $\tilde{G}$. Applying Proposition \ref{prop_graded} and canceling common factors (like in the proof of Lemma \ref{lemma1}), we find that 
$$\left[\frac{X'\cap IM}{IX'}\right]=[\bar{Y}_0]^{-1}\prod_{j=0}^\infty[I^j\otimes \bar{Y}_j].$$
Hence, 
$$|\{\text{lifts of $\iota$}\}|=\left|\Hom\left(Y_0,\frac{X'\cap IM}{IX'}\right)\right|=|\Hom(Y_0,\bar{Y}_0)|^{-1}\prod_{j=0}^\infty|\Hom(Y_0,I^j\otimes \bar{Y}_j)|,$$
thereby completing the proof of our second lemma.
\end{proof}

By Lemma \ref{lemma1},
\begin{align*}
    \sum_{X\in\mathscr{F}^{-1}Y}[M/X]=\sum_{X\in\mathscr{F}^{-1}Y}[M/\mathscr{R}X][\mathscr{R}X/X]=\left(\prod_{k=0}^{\infty}[I^k\otimes \bar{Y}_k]\right)\sum_{X\in\mathscr{F}^{-1}Y} [M/\mathscr{R}X].
\end{align*}
By Lemma \ref{lemma2},
$$\sum_{X\in\mathscr{F}^{-1}Y}[M/\mathscr{R}X]=\left(|\Hom(Y_0,\bar{Y}_0)|^{-1}\prod_{j=0}^\infty|\Hom(Y_0,I^j\otimes \bar{Y}_j)|\right)\sum_{X'\in\mathscr{F}^{-1}\{\mathscr{S}Y\}}[M/X'].$$
By the induction hypothesis (\ref{induction_hypothesis}), 
\begin{align*}
    \sum_{X'\in\mathscr{F}^{-1}\{\mathscr{S}Y\}}[M/X']&=\prod_{j=0}^{\infty} |\Hom(Y_{j+1},\bar{Y}_{j+1})|^{-1}\prod_{k=0}^j |\Hom(Y_{j-k+1},I^{k}\otimes \bar{Y}_{j+1})|[I^k\otimes \bar{Y}_{j+1}]\\
    &=\prod_{j=1}^{\infty} |\Hom(Y_{j},\bar{Y}_{j})|^{-1}\prod_{k=0}^{j-1} |\Hom(Y_{j-k},I^{k}\otimes \bar{Y}_{j})|[I^k\otimes \bar{Y}_{j}].
\end{align*}
Combining these three intermediate calculations, we complete the proof of our proposition.
\end{proof}

For finitely generated left $R/I$-modules $A,B,C$ with $B$ of finite length, we let $F_{BC}^A$ be the number of submodules $D$ of $A$ such that $D\simeq C$ and $A/D\simeq B$. This is a \textit{Hall number}, and it depends only on the isomorphism classes of $A,B,C$ (c.f. \cite{iyama_hall}). These numbers are related to the partial zeta function of $A$ and $C$ via the formula
$$Z(A,C)=\sum_B F_{BC}^A [B] \in \Dir $$
where the sum is over all finite length left $R/I$-modules $B$ up to isomorphism (c.f. \cite{iyama_rep}). Our assumption that $R$ is left arithmetical guarantees that these Hall numbers are all finite, and that the set of isomorphism classes of finite length left $R/I$-modules is countable.

Moreover, these Hall numbers may be multiplied to count chains with certain isomorphism class restrictions. Namely, if $(A_j)$ is a sequence of finitely generated left $R/I$-modules that eventually stabilises to $A$, and $(B_j)$ is a sequence of finite length left $R/I$-modules that eventually stabilises to $0$, then 
$$\prod_{j=0}^\infty F_{B_j A_j}^{A_{j+1}}$$
counts the number of chains $C_0\subseteq \cdots \subseteq C_j \subseteq \cdots =A$ of finite colength submodules of $A$ that eventually stabilise to $A$ and satisfy $C_j\simeq A_j$ and $C_{j+1}/C_j\simeq B_j$ for each $j$ (c.f. \cite[Remark~1.3(v), p.~8]{schiffmann_lectures} or \cite[Eqn.~(1.5), p.~26]{hubery_notes}).

For a sequence $P$ of finitely generated projective left $R/I$-modules and a nonnegative integer $j$, we have a function from the category of finite length left $R/I$-modules to $G$ given by
$$\langle P,-\rangle_j^I=|\Hom(P_j,-)|^{-1}\prod_{k=0}^j |\Hom(P_{j-k},I^{k}\otimes -)|[I^k\otimes -]$$ (c.f. formula (\ref{fundamental_formula})). It descends to a function $G\to G$, thereby giving a change of variable for our partial zeta functions in $\Dir$. Note that $\langle P,-\rangle_j^I$ depends on $P$ up to entrywise isomorphism.

In preparation for our next and final formula, we recall or introduce the following data. 
\begin{itemize}
    \item Our map $\mathscr{F}:\mathcal{X}\to\mathcal{Y}$ from before.
    \item Let $\mathcal{P}=\mathcal{Y}/\sim$ where $Y\sim Y'$ if and only if $Y_j\simeq Y_j'$ for each $j$. Then we have a canonical map $\mathcal{Y}\to\mathcal{P}$. We may represent each $P\in\mathcal{P}$ as a sequence $(P_j)$ of isomorphism classes of finitely generated projective left $R/I$-modules such that $P_j=[M/IM]$ for $j\gg0$.
    \item Let $\mathcal{Z}$ be the set of sequences $(Z_j)$ of isomorphism classes of finite length left $R/I$-modules such that $Z_j=0$ for $j\gg0$. Then $Y\mapsto([\bar{Y}_j])$ defines a map $\mathcal{Y}\to\mathcal{Z}$.
    \item The product of the maps $\mathcal{Y}\to\mathcal{P}$ and $\mathcal{Y}\to\mathcal{Z}$ gives us a map $\mathscr{H}:\mathcal{Y}\to\mathcal{P}\times\mathcal{Z}$. We also have the canonical projection $\mathcal{P}\times\mathcal{Z}\to\mathcal{P}$, which we compose with $\mathscr{H}$ and $\mathscr{F}$ to get another map $\mathscr{G}:\mathcal{X}\to\mathcal{Y}\to\mathcal{P}\times\mathcal{Y}\to\mathcal{P}$.
\end{itemize}
 We illustrate this in the commutative diagram
\[\begin{tikzcd}
	& {\mathcal{X}} \\
	& {\mathcal{Y}} \\
	{\mathcal{P}} & {\mathcal{P}\times\mathcal{Z}.}
	\arrow["{\mathscr{F}}", from=1-2, to=2-2]
	\arrow["{\mathscr{G}}"', dashed, from=1-2, to=3-1]
	\arrow["{\mathscr{H}}", from=2-2, to=3-2]
	\arrow[from=3-2, to=3-1]
\end{tikzcd}\]

\begin{theorem}\label{thm_prolif}
    Let $R$ be a left arithmetical ring, and let $M$ be a finitely generated projective left $R$-module. Suppose that there exists an invertible ideal $I$ of $R$, containing the Jacobson radical of $R$, such that the finite colength submodules of $M/IM$ are all projective over $R/I$. Then
    \begin{align*}
        Z(M)=\Prolif(M/IM;I\otimes-),
    \end{align*}
where
\begin{itemize}
    \item $\Prolif(M/IM;I\otimes-)=\sum_{P\in\mathcal{P}}\prod_{j=0}^{\infty}Z(P_{j+1},P_j;\langle P,-\rangle^{I}_j)$;
    \item $\mathcal{P}$ is the set of sequences $P=(P_j)$ of isomorphism classes of finitely generated projective left $R/I$-modules of the form $([Y_j])$, where $Y_0\subseteq Y_1\subseteq\cdots$ is a chain of finite colength submodules of $M/IM$ eventually stabilising to $M/IM$;
    \item $\langle P,-\rangle^I_j=|\Hom(P_j,-)|^{-1}\prod_{k=0}^j |\Hom(P_{j-k},I^{k}\otimes -)|[I^{k}\otimes -]$.
\end{itemize}
We refer to $\Prolif(M/IM;I\otimes-)$ as the proliferation of $M/IM$ by $I\otimes-$. We may similarly consider proliferations by other monoid automorphisms of $G$.
\end{theorem}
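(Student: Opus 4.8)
The plan is to compute $Z(M)=\sum_{X}[M/X]$, a pseudo-convergent sum over the set $\mathcal{X}$ of finite colength submodules of $M$, by organising $\mathcal{X}$ along the maps $\mathscr{F}$, $\mathscr{H}$, $\mathscr{G}$ already set up. Since $Z(M)$ is unconditionally pseudo-convergent and $\Dir$ is complete for a nonarchimedean topology, every subsum converges and we may freely regroup: $Z(M)=\sum_{P\in\mathcal{P}}\sum_{X\in\mathscr{G}^{-1}P}[M/X]$, and then $\sum_{X\in\mathscr{G}^{-1}P}[M/X]=\sum_{Y}\sum_{X\in\mathscr{F}^{-1}Y}[M/X]$ with the outer sum over those $Y\in\mathcal{Y}$ having $[Y_j]=P_j$ for all $j$. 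The key point is that Proposition \ref{prop_fundamental} identifies $\sum_{X\in\mathscr{F}^{-1}Y}[M/X]$ with $\prod_{j\ge 0}\langle P,[\bar{Y}_j]\rangle^I_j$ as soon as $[Y_j]=P_j$: replacing each module $Y_{j-k}$ in formula (\ref{fundamental_formula}) by the isomorphic projective $P_{j-k}$ turns its $j$-th factor into exactly $\langle P,-\rangle^I_j$ evaluated at $[\bar{Y}_j]$, by the very definition of $\langle P,-\rangle^I_j$. In particular $\sum_{X\in\mathscr{F}^{-1}Y}[M/X]$ depends on $Y$ only through $\mathscr{H}(Y)=(P,([\bar{Y}_j]))\in\mathcal{P}\times\mathcal{Z}$.

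Next I would group over $\mathscr{H}$. For $(P,Z)\in\mathcal{P}\times\mathcal{Z}$, the fibre $\mathscr{H}^{-1}(P,Z)$ consists of the chains $Y_0\subseteq Y_1\subseteq\cdots$ of finite colength submodules of $M/IM$ stabilising to $M/IM$ with $Y_j\simeq P_j$ and $Y_{j+1}/Y_j\simeq Z_j$; by the Hall-number count recalled just before Theorem \ref{thm_prolif} it has exactly $\prod_{j\ge 0}F^{P_{j+1}}_{Z_j P_j}$ elements, a finite number since $P_j$ stabilises to $[M/IM]$ and $Z_j$ to $0$. Hence, for each $P$,
\[
\sum_{X\in\mathscr{G}^{-1}P}[M/X]=\sum_{Z\in\mathcal{Z}}\Bigl(\prod_{j\ge 0}F^{P_{j+1}}_{Z_j P_j}\Bigr)\prod_{j\ge 0}\langle P,Z_j\rangle^I_j=\sum_{Z\in\mathcal{Z}}\prod_{j\ge 0}F^{P_{j+1}}_{Z_j P_j}\,\langle P,Z_j\rangle^I_j,
\]
the last equality because all but finitely many factors are $1$, while pairs $(P,Z)$ with empty fibre contribute $0$, so summing over all of $\mathcal{Z}$ is harmless.

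It remains to recognise the inner sum as an infinite product of partial zeta functions. From $Z(A,C)=\sum_B F^A_{BC}[B]$ and the change of variable $\langle P,-\rangle^I_j\colon G\to G$ we get $Z(P_{j+1},P_j;\langle P,-\rangle^I_j)=\sum_B F^{P_{j+1}}_{B P_j}\,\langle P,B\rangle^I_j$, the sum over finite length left $R/I$-modules $B$ up to isomorphism. The product $\prod_{j\ge 0}Z(P_{j+1},P_j;\langle P,-\rangle^I_j)$ pseudo-converges: for $j\gg 0$ one has $P_j=P_{j+1}=[M/IM]$, so the factor is $1$ plus terms $F^{[M/IM]}_{B,[M/IM]}\langle P,B\rangle^I_j$ with $B\ne 0$, and after cancelling the $k=0$ term against the leading $|\Hom(P_j,-)|^{-1}$ one sees $\mathscr{N}(\langle P,B\rangle^I_j)\ge\mathscr{N}(B)\prod_{k=1}^{j}\mathscr{N}(I^k\otimes B)\ge 2^{j+1}$, since $I\otimes-$ is an automorphism of $G$ and every nonzero module has norm at least $2$. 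Expanding the product and discarding the (norm-$\infty$) contributions of sequences $(B_j)$ not eventually $0$ gives exactly $\sum_{Z\in\mathcal{Z}}\prod_{j\ge 0}F^{P_{j+1}}_{Z_j P_j}\langle P,Z_j\rangle^I_j$, as $\mathcal{Z}$ is precisely the set of such sequences. Substituting back, $Z(M)=\sum_{P\in\mathcal{P}}\prod_{j\ge 0}Z(P_{j+1},P_j;\langle P,-\rangle^I_j)=\Prolif(M/IM;I\otimes-)$.

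The main obstacle is the convergence bookkeeping: one must verify that every regrouping above — partitioning $\mathcal{X}$ by $\mathscr{F}$, then by $\mathscr{G}$ and $\mathscr{H}$, and expanding each infinite product $\prod_j Z(P_{j+1},P_j;\langle P,-\rangle^I_j)$ into a sum over $\mathcal{Z}$ — is a legitimate operation in the complete nonarchimedean ring $\Dir$, and that the ``eventually stabilising'' and ``eventually zero'' conditions defining $\mathcal{Y}$, $\mathcal{P}$, $\mathcal{Z}$ are exactly the support conditions that make these products converge and make their expansions indexed by $\mathcal{Z}$. The underlying principle — that if $\sum_i a_i$ pseudo-converges then so does $\sum_\lambda(\sum_{i\in I_\lambda}a_i)$, to the same value, for any partition $\{I_\lambda\}$ of the index set — is elementary, but it is worth isolating as a lemma and then checking the fibre descriptions of $\mathscr{F}$, $\mathscr{G}$, $\mathscr{H}$ once and for all, so that each regrouping in the argument is visibly of this form.
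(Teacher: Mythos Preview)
Your proposal is correct and follows essentially the same approach as the paper: decompose $\mathcal{X}$ along $\mathscr{G}$, then along $\mathscr{H}$ and $\mathscr{F}$, invoke Proposition~\ref{prop_fundamental} for the $\mathscr{F}$-fibres, count the $\mathscr{H}$-fibres by products of Hall numbers, and finally interchange the sum over $\mathcal{Z}$ with the product over $j$ to recognise the partial zeta functions. The only difference is that you supply explicit pseudo-convergence estimates (the $2^{j+1}$ bound) and isolate the regrouping principle, whereas the paper treats these manipulations as routine in the complete nonarchimedean ring $\Dir$.
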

\begin{proof}
Firstly, we have
$$Z(M)=\sum_{X\in\mathcal{X}}[M/X]=\sum_{P\in\mathcal{P}}\ \sum_{X\in\mathscr{G}^{-1}P}[M/X].$$
So, we're left to study the inner-sum for each $P\in\mathcal{P}$. To this end, we have
$$\sum_{X\in\mathscr{G}^{-1}P}[M/X]=\sum_{Z\in\mathcal{Z}}\ \sum_{Y\in\mathscr{H}^{-1}(P,Z)}\ \sum_{X\in\mathscr{F}^{-1}Y}[M/X].$$
By Theorem \ref{prop_fundamental}, the right-hand side is equal to 
\begin{align*}
    \sum_{Z\in\mathcal{Z}}\ \sum_{Y\in\mathscr{H}^{-1}(P,Z)}\ \prod_{j=0}^{\infty}\langle P,Z_j\rangle^I_j.
\end{align*}
The inner-sum consists of
$$\prod_{j=0}^{\infty}F_{Z_j P_j}^{P_{j+1}}$$
terms. Hence, our double sum is equal to 
\begin{align*}
    \sum_{Z\in\mathcal{Z}}\prod_{j=0}^{\infty}F_{Z_j P_j}^{P_{j+1}}\langle P,Z_j\rangle^I_j=\prod_{j=0}^{\infty}\sum_{Z_j}F_{Z_j P_j}^{P_{j+1}}\langle P,Z_j\rangle^I_j=\prod_{j=0}^{\infty}Z(P_{j+1},P_j;\langle P,-\rangle^I_j),
\end{align*}
where $\sum_{Z_j}$ is over all finite length left $R/I$-modules up to isomorphism.
\end{proof}

\begin{corollary}\label{corollary_single_sliver}
 Let $R$ be a left arithmetical ring, and let $M$ be a finitely generated projective left $R$-module. Suppose that there exists an invertible ideal $I$ of $R$, containing the Jacobson radical of $R$, such that the finite colength submodules of $M/IM$ are all isomorphic. Then
    $$Z(M)=\Prolif(M/IM,I\otimes-)=\prod_{j=0}^\infty Z(M/I;\langle M/I,-\rangle^I_j),$$
    where each
    $$\langle M/IM,-\rangle^I_j=|\Hom(M/IM,-)|^{-1}\prod_{k=0}^j |\Hom(M/IM,I^{k}\otimes -)|[I^{k}\otimes -].$$
\end{corollary}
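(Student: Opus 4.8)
The plan is to obtain the Corollary by specialising Theorem \ref{thm_prolif} to the situation in which the index set $\mathcal{P}$ degenerates to a single point. (Here, and below, $M/I$ is shorthand for $M/IM$.)

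First I would verify that the hypotheses of Theorem \ref{thm_prolif} are met. The module $M/IM$ is a finite colength---indeed colength zero---submodule of itself, so the assumption that the finite colength submodules of $M/IM$ are all isomorphic forces each of them to be isomorphic to $M/IM$. Since ${}_R M$ is projective, $M/IM\cong (R/I)\otimes_R M$ is projective over $R/I$; hence every finite colength submodule of $M/IM$ is projective over $R/I$, so Theorem \ref{thm_prolif} applies and $Z(M)=\Prolif(M/IM;I\otimes-)$.

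Next I would pin down $\mathcal{P}$. By definition $\mathcal{P}$ consists of the sequences $([Y_j])$ coming from chains $Y_0\subseteq Y_1\subseteq\cdots$ of finite colength submodules of $M/IM$ stabilising to $M/IM$. By the previous paragraph every $Y_j\simeq M/IM$, so $[Y_j]=[M/IM]$ for all $j$; the constant chain $Y_j=M/IM$ shows this sequence does occur. Hence $\mathcal{P}=\{P\}$ with $P_j=[M/IM]$ for every $j$, and for this $P$ the change of variable $\langle P,-\rangle^I_j$ is literally $\langle M/IM,-\rangle^I_j$. Theorem \ref{thm_prolif} therefore collapses to $Z(M)=\prod_{j=0}^\infty Z(M/IM,M/IM;\langle M/IM,-\rangle^I_j)$.

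The last step is to replace each partial zeta function by the corresponding full one. Since every finite colength submodule of $M/IM$ is isomorphic to $M/IM$, the submodules contributing to $Z(M/IM,M/IM)$ are exactly those contributing to $Z(M/IM)$, so $Z(M/IM,M/IM;\varphi)=Z(M/IM;\varphi)$ for any change of variable $\varphi$; applying this with $\varphi=\langle M/IM,-\rangle^I_j$ finishes the argument. I do not anticipate a real obstacle here; the one point to handle with care is the observation that ``all finite colength submodules isomorphic'' entails ``isomorphic to $M/IM$'' (by taking the colength zero submodule), since this single fact is what simultaneously reduces $\mathcal{P}$ to a point and identifies the partial zeta function with the full one.
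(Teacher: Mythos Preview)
Your proposal is correct and follows essentially the same route as the paper, which simply says the corollary is the special case of Theorem~\ref{thm_prolif} when $\mathcal{P}$ is the singleton consisting of the constant sequence $[M/IM]$. You actually supply two details the paper's one-line proof leaves implicit: that the ``all isomorphic'' hypothesis implies projectivity over $R/I$ (so Theorem~\ref{thm_prolif} applies), and that the partial zeta function $Z(M/IM,M/IM;-)$ coincides with the full $Z(M/IM;-)$ under this hypothesis.
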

\begin{proof}
    This is a special case of Theorem \ref{thm_prolif}, when $\mathcal{P}$ is the singleton set consisting of the sequence that is constantly $[M/IM]$.
\end{proof}

\begin{example}\label{example_voll}
    Let $R$ be a ring, with invertible Jacobson radical $J$, such that $R/J$ is finite. Let $M$ be a finitely generated projective left $R$-module. Then we may apply Theorem \ref{thm_prolif}. Since $R/J$ is a finite semisimple ring, we may explicitly write the partial zeta functions over $R/J$ in terms of $q$-binomial coefficients. In the special case when $R$ is a commutative complete discrete valuation ring with finite residue field, this application of Theorem \ref{thm_prolif} recovers a well-known formula (c.f. \cite[E.g.~2.20]{Voll}).
\end{example}

\begin{example}\label{example_rossmann}
    Take $(R,I,M)=(\mathbb{Z}\llbracket t\rrbracket, (t), {}_{\mathbb{Z}\llbracket t\rrbracket} \mathbb{Z}\llbracket t\rrbracket)$. Then $R/I\cong\mathbb{Z}$, and so we may identify $\Dir(R)$ with the ordinary Dirichlet algebra. Corollary \ref{corollary_single_sliver} recovers Rossmann's formula \cite[Thm.~4.4]{rossmann}
    $$\zeta({}_R R;s)=\prod_{j=1}^\infty\zeta(js-j+1),$$
    where $\zeta(s)$ is the Riemann zeta function. Note that $T=\Spec(R)$ is a two-dimensional regular scheme with well-defined \textit{arithmetic zeta function} $\zeta(T,s)=\zeta(s)$ \cite{Serre}. Hence, Rossmann's formula follows easily from the Chinese remainder theorem and Lustig's local formula \cite{Lus56}; see \cite{chan2024zetafunctionsorderssurfaces} for more about this local-global approach.
\end{example}

\section{Lifted Hey formula}\label{section_lift_hey}

Let $R$ be a ring with Jacobson radical $J$ such that $R/J$ is finite and ${}_R J$ is finitely generated. Let $M$ be a finitely generated projective left $R$-module. Suppose that there exists an invertible ideal $I\subseteq J$ of $R$ such that the finite colength submodules of $M/IM$ are all isomorphic. Write out Artin-Wedderburn decompositions
$$R/J\simeq\prod_{i\in\mathcal{S}} \M_{r_i}(\mathbb{F}_{q_i}) \qquad \mbox{and} \qquad M/JM\simeq \bigoplus_{i\in\mathcal{S}} \left(\mathbb{F}_{q_i}^{\oplus r_i}\right)^{\oplus m_i}$$
over $\mathcal{S}=\mathcal{S}(R)$. Write $S_i=\mathbb{F}_{q_i}^{\oplus r_i}$ and $z_i=[S_i]\in G(R)$ for each $i$. Therefore, $\Dir(R)=\mathbb{C}\llbracket z_i\rrbracket_{i\in\mathcal{S}}$. Let $\sigma$ be the permutation on $\mathcal{S}$ defined by $I\otimes_R S_i\simeq S_{\sigma(i)}$ for each $i$.

\begin{theorem}\label{thm_lifted_hey}
    We have
    \begin{align}\label{W_formula}
        Z(M;z)=\Prolif(M/IM,I\otimes-)=\prod_{n=0}^\infty\prod_{i\in\mathcal{S}}\prod_{j=0}^{m_i-1}(1-q_i^{j-m_i}\prod_{k=0}^n w_{\sigma^k(i)})^{-1},
    \end{align}
     where each $w_i=q_i^{m_i}z_i$.
\end{theorem}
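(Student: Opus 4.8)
The plan is to combine Corollary \ref{corollary_single_sliver} with the abstract Hey formula of Appendix \ref{section_hey}, pushing the latter through the change of variables that the former supplies. The first equality $Z(M)=\Prolif(M/IM,I\otimes-)$ is exactly Corollary \ref{corollary_single_sliver}, which applies because the finite colength submodules of $M/IM$ are all isomorphic; that corollary also records
\[
\Prolif(M/IM,I\otimes-)=\prod_{n=0}^{\infty} Z\bigl(M/IM;\langle M/IM,-\rangle^{I}_{n}\bigr),
\]
so it remains to evaluate this product.

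First I would observe that, for each $n$, the assignment $B\mapsto\langle M/IM,B\rangle^{I}_{n}$ is multiplicative in $B$: the three ingredients $|\Hom(M/IM,-)|$, the twists $I^{k}\otimes-$, and the classes $[-]$ each convert direct sums into products. Hence it descends to a monoid endomorphism of $G$ — the identity when $n=0$, and valuation-increasing on the generators $z_{i}$ when $n\ge 1$ — and so extends to a continuous $\mathbb{C}$-algebra endomorphism $\varphi_{n}$ of $\Dir$. Since the change of variables in a (partial) zeta function acts term by term, and every finite colength submodule of $M/IM$ is isomorphic to $M/IM$, this yields $Z(M/IM;\langle M/IM,-\rangle^{I}_{n})=\varphi_{n}\bigl(Z(M/IM)\bigr)$; I am thus reduced to computing $\varphi_{n}$ on generators and to computing $Z(M/IM)$.

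For $\varphi_{n}$ on generators: since $M/IM$ is projective over $R/I$, a homomorphism out of $M/IM$ factors through its radical quotient $M/JM\simeq\bigoplus_{l\in\mathcal{S}}S_{l}^{\oplus m_{l}}$, so $|\Hom(M/IM,S_{l})|=|\End S_{l}|^{m_{l}}=q_{l}^{m_{l}}$; moreover $I^{k}\otimes_{R}S_{i}\simeq S_{\sigma^{k}(i)}$ by definition of $\sigma$, and $q_{\sigma(i)}=q_{i}$ since the auto-equivalence $I\otimes_{R}-$ preserves endomorphism rings of simples. Unwinding the definition of $\langle M/IM,-\rangle^{I}_{n}$ then gives the clean identity $\varphi_{n}(w_{i})=\prod_{k=0}^{n}w_{\sigma^{k}(i)}$, where $w_{i}=q_{i}^{m_{i}}z_{i}$. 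For $Z(M/IM)$: I would invoke the abstract Hey formula of Appendix \ref{section_hey} (applicable here, as $R/I$ is left arithmetical and semilocal and the finite colength submodules of the projective module $M/IM$ are all isomorphic) to obtain
\[
Z(M/IM)=\prod_{i\in\mathcal{S}}\prod_{j=0}^{m_{i}-1}\bigl(1-q_{i}^{\,j}z_{i}\bigr)^{-1}=\prod_{i\in\mathcal{S}}\prod_{j=0}^{m_{i}-1}\bigl(1-q_{i}^{\,j-m_{i}}w_{i}\bigr)^{-1}.
\]
Applying $\varphi_{n}$ term by term and substituting $\varphi_{n}(w_{i})=\prod_{k=0}^{n}w_{\sigma^{k}(i)}$ produces the $n$-th factor of $(\ref{W_formula})$, and taking the product over $n\ge 0$ completes the proof.

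The real weight of the argument sits in the abstract Hey formula, established separately in Appendix \ref{section_hey} by adapting Solomon's method; within the present synthesis the only delicate point is verifying that $\langle M/IM,-\rangle^{I}_{n}$ is a genuine continuous ring endomorphism of $\Dir$, which uses that $I\subseteq J$ being invertible forces $I^{k}\otimes_{R}B\cong(I^{k}/I^{k+1})\otimes_{R/I}B$ to again be a finite length $R/I$-module for every finite length $R/I$-module $B$, together with the bookkeeping that converts the Hey product into the nested product over the $w_{\sigma^{k}(i)}$.
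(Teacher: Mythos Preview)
Your proof is correct and follows essentially the same route as the paper: invoke Corollary~\ref{corollary_single_sliver}, apply the abstract Hey formula to $Z(M/IM)$, and compute the change of variables $\langle M/IM,S_i\rangle_n^I$ on simples via $|\Hom(M/IM,S_i)|=q_i^{m_i}$ and $[I^k\otimes S_i]=z_{\sigma^k(i)}$. Your packaging of the substitution as a continuous $\mathbb{C}$-algebra endomorphism $\varphi_n$ of $\Dir$ is a clean rephrasing of what the paper does directly (one small quibble: $\langle M/IM,-\rangle_n^I$ takes values in $\mathbb{Q}_{>0}\cdot G\subset\Dir$, not in $G$ itself, so ``monoid endomorphism of $G$'' is a slight abuse; and your observation $q_{\sigma(i)}=q_i$, while true, is not actually needed for the identity $\varphi_n(w_i)=\prod_{k=0}^n w_{\sigma^k(i)}$).
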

\begin{proof}
    By Corollary \ref{corollary_single_sliver}, 
    $$Z(M)=\Prolif(M/IM,I\otimes-)=\prod_{n=0}^\infty Z(M/I;\langle M/I,-\rangle^I_n),$$
    where each
    $$\langle M/IM,-\rangle^I_n=|\Hom_{R}(M/IM,-)|^{-1}\prod_{k=0}^n |\Hom_{R}(M/IM,I^{k}\otimes_R -)|[I^{k}\otimes_R -].$$
    By our abstract Hey formula (see Section \ref{section_hey}),
    $$Z(M/IM)=\prod_{i\in\mathcal{S}}\prod_{j=0}^{m_i-1}(1-q_i^j z_i)^{-1}.$$
    So, we're left to compute $\langle M/IM,S_i\rangle^I_n$ for each $i$ and $n$. First, we compute that
    \begin{align*}
        |\Hom_R(M/IM,S_i)|&=|\Hom_{R/J}(M/JM,S_i)|\\
        &=|\Hom_{\M_{r_i}(\mathbb{F}_{q_i})}(\left(\mathbb{F}_{q_i}^{\oplus r_i}\right)^{\oplus m_i},\mathbb{F}_{q_i}^{\oplus r_i})|\\
        &=|\Hom_{\mathbb{F}_{q_i}}(\mathbb{F}_{q_i}^{\oplus m_i},\mathbb{F}_{q_i})|\\
        &=q_i^{m_i}
    \end{align*}
    for each $i$. Second, we see that $[I^k\otimes_R S_i]=[S_{\sigma^k(i)}]=z_{\sigma^k(i)}$ for each $i$ and $k$. Therefore, we also have
    $$|\Hom_R(M/IM,I^k\otimes_R S_i)|=q_{\sigma^k(i)}^{m_{\sigma^k(i)}}$$
    for each $i$ and $k$. Combining these calculations, we arrive at our desired formula.
\end{proof}

Write $n^{-s}=[\mathbb{Z}/n\mathbb{Z}]\in G(\mathbb{Z})$ for each positive integer $n$. Then the canonical map $G(R)\to G(\mathbb{Z}):z_i\mapsto q_i^{-s}$ for each $i\in\mathcal{S}(R)$. Moreover, the induced map $G(R)\to G(\mathbb{Z})$ takes the Bushnell-Reiner zeta function $Z(M;z)$ to the familiar Solomon zeta function $\zeta(M;s)$.

The following special case of Theorem \ref{thm_lifted_hey} has pleasant global applications, as demonstrated in \cite{chan2024zetafunctionsorderssurfaces}. 

\begin{corollary}\label{cor_hom_slice}
If the simple components of $R/J$ are all isomorphic and the simple summands of $M/JM$ are all isomorphic, then we have 
$$\zeta(M;s)=\prod_{n=0}^\infty \prod_{j=0}^{m-1}(1-q^{j+mn-r(n+1)s})^{-|\mathcal{S}|}.$$
Here, each $m_i=m$, $q_i=q$ and $r_i=r$.
\end{corollary}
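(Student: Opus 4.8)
The plan is to deduce Corollary \ref{cor_hom_slice} by specialising the infinite product of Theorem \ref{thm_lifted_hey} along the canonical continuous ring homomorphism $\Dir(\mathscr{N}_{R/\mathbb{Z}})\colon\Dir(R)\to\Dir(\mathbb{Z})$, which, as recalled in Section \ref{section_foundations}, carries the Bushnell-Reiner zeta function $Z(M;z)$ to the Solomon zeta function $\zeta(M;s)$. First I would feed the homogeneity hypotheses $m_i=m$, $q_i=q$, $r_i=r$ into formula (\ref{W_formula}): then $w_i=q^{m}z_i$ for every $i$, so $\prod_{k=0}^{n}w_{\sigma^k(i)}=q^{m(n+1)}\prod_{k=0}^{n}z_{\sigma^k(i)}$ and $q_i^{\,j-m_i}=q^{\,j-m}$, whence the general factor of (\ref{W_formula}) collapses to $\bigl(1-q^{\,j+mn}\prod_{k=0}^{n}z_{\sigma^k(i)}\bigr)^{-1}$.

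Next I would compute the image of each $z_i$ under $\Dir(\mathscr{N}_{R/\mathbb{Z}})$. Since $z_i=[S_i]$ with $S_i=\mathbb{F}_{q}^{\oplus r}$, the underlying finite abelian group has order $q^{r}$; and in $G(\mathbb{Z})$ the class of a finite abelian group is determined by its order alone, so $z_i\mapsto[\mathbb{Z}/q^{r}\mathbb{Z}]=q^{-rs}$ for every $i\in\mathcal{S}$. The crucial observation is that this image is the same for all $i$, so that the permutation $\sigma$ becomes invisible after specialisation: $\prod_{k=0}^{n}z_{\sigma^k(i)}\mapsto q^{-r(n+1)s}$, independently of $i$. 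Consequently each factor of (\ref{W_formula}) maps to $\bigl(1-q^{\,j+mn-r(n+1)s}\bigr)^{-1}$.

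Finally, since this expression no longer depends on $i$, the product over $i\in\mathcal{S}$ merely contributes an overall exponent $-|\mathcal{S}|$, yielding
$$\zeta(M;s)=\prod_{n=0}^{\infty}\prod_{j=0}^{m-1}\bigl(1-q^{\,j+mn-r(n+1)s}\bigr)^{-|\mathcal{S}|},$$
as claimed; pseudo-convergence of the right-hand side is inherited from Theorem \ref{thm_lifted_hey} through the continuity of $\Dir(\mathscr{N}_{R/\mathbb{Z}})$ applied to the partial products. There is no serious obstacle here, the argument being a pure specialisation, but the one point deserving care is the norm bookkeeping: one must use $|S_i|=q_i^{\,r_i}$ (not $q_i$) when identifying the image of $z_i$ in $\Dir(\mathbb{Z})$, since it is precisely the factor $r$ in this cardinality that produces the exponent $-r(n+1)s$ appearing in the statement.
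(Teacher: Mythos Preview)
Your proof is correct and follows exactly the paper's approach: specialise the infinite product of Theorem~\ref{thm_lifted_hey} along the map $\Dir(R)\to\Dir(\mathbb{Z})$, using the homogeneity hypotheses so that each factor becomes independent of $i$ and the product over $\mathcal{S}$ contributes the exponent $-|\mathcal{S}|$. Your attention to the norm bookkeeping $|S_i|=q^{r}$ (so that $z_i\mapsto q^{-rs}$) is exactly the point that makes the exponent $-r(n+1)s$ come out right, and in fact is more precise than the paper's own display ``$z_i\mapsto q_i^{-s}$'' preceding the corollary.
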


\begin{example}\label{example_lustig}
    Let $R$ be a commutative two-dimensional regular local ring with finite residue field $\mathbb{F}_q$. Let $\mathfrak{m}$ be the maximal ideal of $R$. Take $I=(x)$, for any $x\in\mathfrak{m}-\mathfrak{m}^2$, so that $R/I$ is a commutative discrete valuation ring with finite residue field $\mathbb{F}_q$. Then Corollary \ref{cor_hom_slice} recovers Lustig's formula \cite{Lus56} (see \cite{Ber03} for an English summary)
    \begin{align}\label{lustig_formula}
    \zeta({}_R R;s)=\prod_{n=0}^\infty (1-q^{n-(n+1)s})^{-1}.
\end{align}
Here, $r=m=|\mathcal{S}|=1$. For positive integers $i$ and $j$, let $p(i,j)$ be the number of integer partitions of $i$ with greatest part $j$. Then we recall Euler's two variable generating function \cite[\S.~2.1]{And84}
$$1+\sum_{i=1}^\infty\sum_{j=1}^i p(i,j) z^i w^j=\prod_{n=0}^\infty (1-wz^n)^{-1}.$$
Taking $z=q^{1-s}$ and $w=q^{-1}$, we see that
$$\sum_{j=1}^i p(i,j)q^{i-j}$$
enumerates the ideals of $R$ with colength $i\ge 1$.
\end{example}

\section{Two-dimensional regular semiperfect rings}\label{section_rump}

Let $R$ be a left \textit{two-dimensional regular semiperfect ring}, with Jacobson radical $J$, in the sense of Rump \cite{rump_reg}. That is, $R$ is a left Noetherian semiperfect ring in which there exist two invertible ideals $I_1,I_2$ satisfying $I_1+I_2=J$ and $I_1I_2=I_1\cap I_2=I_2I_1$. Rump classified these rings in terms of rank two \textit{almost regular valuations} \cite[Theorem~4.1]{rump_reg}, for which it is straightforward to generate a large number of examples. Furthermore, $R$ is a left order of left global dimension two in an (Artinian) semisimple ring \cite[Proposition~1.4 and Theorem~4.2]{rump_reg}.

Choose $I\in\{I_1,I_2\}$. Then it follows that $R/I$ is a (left) \textit{one-dimensional regular semiperfect ring} with Jacobson radical $J/I$. That is, $R/I$ is a left Noetherian semiperfect ring with invertible Jacobson radical $J/I$. As noted by Rump \cite[Proposition~1.6]{rump_reg}, one-dimensional semiperfect rings were classified by Michler \cite{Michler1969}, and they generalise classical hereditary orders \cite[Chapter~9]{Rei03}. In particular, $R/I$ is a hereditary order in an semisimple ring $A$. 

Now, let us suppose that $R/J$ is finite, so that $R$ is left arithmetical. Also, let $M$ be any finitely generated projective left $R$-module. By Theorem \ref{thm_prolif}, we have 
$$Z(M)=\Prolif(M/IM,I\otimes-).$$
We want to study such equations, and both sides are compatible with Morita equivalence in an obvious way. So we may assume, without loss of generality, that $R$ is basic and thus $R/I$ is basic as well. In Section \ref{appendix_brz_hereditary}, we give effective formulae for all the requisite partial Bushnell-Reiner zeta functions over $R/I$. So, let us now look at the set $\mathcal{P}$ and the changes of variable $\langle P,-\rangle^{I}_j$ (as defined in Theorem \ref{thm_prolif}).

For each $i\in\mathcal{S}(R/I)$, let $E_i\twoheadrightarrow S_i$ be a projective cover over $R/I$ for any $S_i\in i$. Then the $E_i$ give a complete set of representatives for the isomorphism classes of principal indecomposable left $R/I$-modules, and thus their images $[E_i]$ form a basis for the Grothendieck group $K_0(R/I)$. Let $K_0(R/I)^+\subset K_0(R/I)$ be the free commutative monoid generated by the $[E_i]$. 

Note that $\mathcal{S}(A)$ forms a basis for the Grothendieck group $K_0(A)$, and then take the associated free commutative monoid $K_0(A)^+\subset K_0(A)$. Since each $A\otimes_{R/I} E_i\in\mathcal{S}(A)$, the canonical group homomorphism $K_0(R/I)\to K_0(A)$ restricts to a monoid homomorphism $K_0(R/I)^+\to K_0(A)^+$. 

We have the finitely generated left $A$-module $V=A\otimes_{R/I}M/IM$. Then, for any finite colength submodule $Y_j\subseteq M/IM$, the canonical embedding $Y_j\hookrightarrow M/IM$ induces an isomorphism $A\otimes_{R/I} Y_j\xrightarrow{\sim} V$ of left $A$-modules. So, the monoid homomorphism $K_0(R/I)^+\to K_0(A)^+$ takes each $[Y_j]\mapsto[V]$. Conversely, for any finite colength submodule $Y_{j+1}\subseteq M/IM$ and any $c\in K_0(R/I)^+$ in the fibre of $[V]$, there exists a finite colength submodule $Y_j\subseteq Y_{j+1}$ with $c=[Y_j]\in K_0(R/I)^+$. 

Let $\mathcal{K}\subseteq K_0(R/I)^+$ be the fibre of $[V]$. Then we may identify $\mathcal{P}$ with the set of sequences of classes in $\mathcal{K}$ that eventually stabilise to $[M/IM]$.

Moreover, we see that $|\Hom_{R/I}(-,-)|$ defines a group homomorphism $K_0(R/I)\times \tilde{G}(R/I)\to\mathbb{Q}^\times$. It is completely determined by the values $$|\Hom_{R/I}(E_i,S_j)|=|\Hom_{R/J}(S_i,S_j)|=|\End_{R/J}(S_i)|^{\delta_{ij}}$$ 
for $i,j\in\mathcal{S}(R/I)$, where $\delta_{ij}$ is the Kronecker delta.

An interesting feature of our approach is that we get to \textit{choose} $I$. We address a consequence of this in the following example.

\begin{example}
Let $\bar{R}$ be a basic (left) one-dimensional semiperfect ring, with Jacobson radical $\bar{J}$, such that $\bar{R}/\bar{J}$ is finite. Then $\bar{J}$ is principally generated by a normal regular element $g\in\bar{R}$, and $\bar{R}/\bar{J}$ is a finite direct product of finite fields. Let $R=\bar{R}\llbracket t\rrbracket$ be the univariate formal power series ring. Then $R$ is a left Noetherian semiperfect ring with Jacobson radical $J=g\bar{R}+tR$. In fact, $I_1=tR$ and $I_2=gR$ are invertible ideals of $R$ satisfying $I_1+I_2=J$ and $I_1I_2=I_1\cap I_2=I_2I_1$. Hence, $R$ is a left two-dimensional regular semiperfect ring. Let $\bar{M}$ be a finitely generated projective left $\bar{R}$-module, and let $M$ be a projective cover of $\bar{M}$ over $R$. Then 
\begin{align*}
    \Prolif(M/I_1M,I_1\otimes-)=Z(M)=\Prolif(M/I_2M,I_2\otimes-).
\end{align*}
On the one hand, $M/I_1M=M/tM\cong\bar{M}$ and $I_1\otimes_{R}-$ acts trivially on $\mathcal{S}$. Therefore,
$$\Prolif(M/I_1M,I_1\otimes-)=\Prolif(\bar{M},\id).$$
On the other hand, $R/I_2\cong(\bar{R}/\bar{J})\llbracket t\rrbracket$. Therefore, $R/I_2$ is isomorphic to a finite direct product of univariate formal power series rings over finite fields. So, the finite colength submodules of $M/I_2M$ are all isomorphic, and hence our lifted Hey formula (Theorem \ref{thm_lifted_hey}) expresses $\Prolif(\bar{M},\id)=\Prolif(M/I_2M,I_2\otimes-)$ as an infinite product.
\end{example}

For the remainder of this section, we further assume that $R/I$ is indecomposable. First, let us consider the partial zeta functions $Z(P_{j+1},P_j)$ over $R/I$ appearing in $\Prolif(M/IM,I\otimes-)$. Put $v=\prod_{i\in\mathcal{S}}[S_i]\in G$, $\ell=\length_A(V)$, and then
$$Z(V)=\prod_{j=0}^{\ell-1}(1-q^j v)^{-1}\in\Dir.$$
By the Bushnell-Reiner-Solomon theorem (see \S.~\ref{BRS_factor}), each $$Z(P_{j+1},P_j)=Z(V)\cdot F(P_{j+1},P_j)$$ for some ``polynomial" $F(P_{j+1},P_j)\in\mathbb{C}[G]\subset\Dir$. 

Note that $I\otimes-$ fixes $v=\prod_{i\in\mathcal{S}}[S_i]\in G$. Moreover, since $R/I$ is indecomposable, there exists a finite field $\mathbb{F}_q$ such that $\End_{R/J}(S_i)\simeq\mathbb{F}_q$ for each $i\in\mathcal{S}$. Then we compute that 
$$|\Hom_{R/I}(E_i,v)|=\prod_{j\in\mathcal{S}}|\Hom_{R/I}(E_i,S_j)|=\prod_{j\in\mathcal{S}}|\End_{R/J}(S_i)|^{\delta_{ij}}=\prod_{j\in\mathcal{S}}q^{\delta_{ij}}=q$$
for each $i\in\mathcal{S}$. Therefore, $|\Hom_{R/I}(P_j,v)|=q^\ell$ for each $P_j$ in the fibre $\mathcal{K}\subset K_0(R/I)^+$ of $[V]\in K_0(A)^+$.
Hence, each $Z(V;\langle P,-\rangle_j^I)=Z_j(V)$ is independent of $P$ and $I$. 

Putting everything together, we have 
$$Z(M)=\Prolif(M/IM,I\otimes-)=\sum_{P\in\mathcal{P}}\prod_{j=0}^\infty Z(P_{j+1},P_j;\langle P,-\rangle^I_j)=\left(\prod_{j=0}^\infty Z_j(V)\right)\sum_{P\in\mathcal{P}}\prod_{j=0}^\infty F(P_{j+1},P_j;\langle P,-\rangle^I_j).$$

\appendix   

\section{Abstract Hey formula}\label{section_hey}

Let $R$ be a ring, with Jacobson radical $J$, such that $R/J$ is finite and ${}_R J$ is finitely generated. Let $M$ be a finitely generated left $R$-module. Suppose that the finite colength submodules of $M$ are all isomorphic. Write out Artin-Wedderburn decompositions
$$R/J\simeq\prod_{i} \M_{r_i}(\mathbb{F}_{q_i}) \qquad \mbox{and} \qquad M/JM\simeq \bigoplus_{i} \left(\mathbb{F}_{q_i}^{\oplus r_i}\right)^{\oplus m_i}.$$
Write $z_i=[\mathbb{F}_{q_i}^{\oplus r_i}]$. Then we have the following abstract version of Hey's formula \cite{Hey27} (c.f. \cite{Bushnell1987}),
\begin{align*}
    Z(M)=\prod_i\prod_{j=0}^{m_i-1}(1-q_i^j z_i)^{-1}.
\end{align*}
To see this, we adapt Solomon's method \cite[\S.~2--3]{Sol_77_first}. 

Let $\mathfrak{M}$ be the set of maximal submodules of $M$. For each nonempty $\mathfrak{S}\subseteq \mathfrak{M}$, let $\mathfrak{m}_\mathfrak{S}=\bigcap_{\mathfrak{m}\in \mathfrak{S}}\mathfrak{m}$. Then it follows from the inclusion-exclusion principle that
$$Z(M)=1+\sum_{\varnothing\ne \mathfrak{S}\subseteq \mathfrak{M}}(-1)^{|\mathfrak{S}|+1}[M/\mathfrak{m}_\mathfrak{S}]Z(\mathfrak{m}_\mathfrak{S}).$$
Putting $\mathfrak{m}_{\varnothing}=M$, we may rewrite this as
$$\sum_{\mathfrak{S}\subseteq \mathfrak{M}}(-1)^{|\mathfrak{S}|}[M/\mathfrak{m}_\mathfrak{S}]Z(\mathfrak{m}_\mathfrak{S})=1.$$
Now we observe that $\mathfrak{m}_\mathfrak{S}\simeq M$ and thus $Z(\mathfrak{m}_\mathfrak{S})=Z(M)$, for all $\mathfrak{S}\subseteq\mathfrak{M}$. Therefore,
\begin{align}\label{inverse_formula_1}
    Z(M)^{-1}=\sum_{\mathfrak{S}\subseteq \mathfrak{M}}(-1)^{|\mathfrak{S}|}[M/\mathfrak{m}_\mathfrak{S}].
\end{align}
Let $\mu$ be the M\"obius function on the finite poset $\mathcal{L}$ of submodules of $M$ contains $JM=\rad(M)$. Then we verify from the recursive definition of $\mu$ \cite[\S.~3]{Rota} that 
$$\mu(X,M)=\sum_{\substack{\mathfrak{S}\subseteq \mathfrak{M}\\ \mathfrak{m}_\mathfrak{S}=X}}(-1)^{|\mathfrak{S}|}$$
for every $X\in\mathcal{L}$. So, we find that the right-hand side of (\ref{inverse_formula_1}) is equal to
\begin{align}\label{inverse_formula_2}
    \sum_{X\in\mathcal{L}} \mu(X,M)[M/X].
\end{align}
Now, from our Artin-Wedderburn decompositions, we have a poset isomorphism $\mathcal{L}\simeq\prod_i\mathcal{L}_i$ where $\mathcal{L}_i$ is the $\mathbb{F}_{q_i}$-subspace poset of $V_i=\mathbb{F}_{q_i}^{\oplus m_i}$. Let $\mu_i$ be the M\"obius function on $\mathcal{L}_i$. Thus, (\ref{inverse_formula_2}) is equal to 
\begin{align}\label{inverse_formula_3}
    \prod_i\sum_{X_i\in\mathcal{L}_i} \mu_i(X_i,V_i)z_i^{\dim(V_i/X_i)}.
\end{align}
By a result of Hall (see \cite[E.g.~2, \S.~5]{Rota}), 
$$\mu_i(X_i,V_i)=(-1)^d q_i^{d(d-1)/2}$$
for every $X_i\in\mathcal{L}_i$ with $\dim(V_i/X_i)=d$. Now, making use of $q$-binomial coefficients and Cauchy's $q$-binomial theorem \cite[\S.~3.3]{And84}, we see that the $i$-th factor in (\ref{inverse_formula_3}) is equal to
$$\sum_{X_i\in\mathcal{L}_i} \mu_i(X_i,V_i)z_i^{\dim(V_i/X_i)}=\sum_{d=0}^{m_i} \binom{m_i}{d}_{q_i} (-1)^d q_i^{d(d-1)/2} z_i^d=\prod_{j=0}^{m_i-1}(1-q_i^j z_i).$$

\section{Bushnell-Reiner zeta functions over hereditary orders}\label{appendix_brz_hereditary}

Let $R$ be a (left) \textit{one-dimensional regular semiperfect ring}, in the sense of Rump \cite{rump_reg}. That is, $R$ is a left Noetherian semiperfect ring with invertible Jacobson radical $J$. We further assume that $R/J$ is finite so that $R$ is left arithmetical. 

We want to study partial Bushnell-Reiner zeta functions of finitely generated projective modules over $R$. By virtue of compatibility with Morita equivalence and direct products, it suffices to assume that $R$ is basic and indecomposable.

A \textit{noncommutative discrete valuation ring} is a domain $\Delta$ with a normal regular generator $\pi\in\Delta$ for its Jacobson radical $(\pi)=\pi\Delta=\Delta\pi$ such that the quotient $k=\Delta/(\pi)$ is a division ring. We always assume that the division ring $k$ is finite for arithmetic purposes. Then, by Wedderburn's little theorem, $k$ is a finite field. Thus, it is appropriate to call $k$ the \textit{residue field} of $\Delta$. We also call $\pi$ a \textit{uniformiser} for $\Delta$.

By Michler's classification theorem \cite{Michler1969} (see also \cite[Proposition~1.6]{rump_reg}), we have 
$$R\simeq\begin{pmatrix}\Delta & \Delta & \Delta & \cdots & \Delta\\
(\pi) & \Delta & \Delta & \cdots & \Delta\\
(\pi) & (\pi)& \Delta & \cdots & \Delta\\
\vdots & \vdots & \vdots & \ddots & \vdots\\
(\pi) & (\pi)& (\pi) & \cdots & \Delta
\end{pmatrix}$$
for some noncommutative discrete valuation ring $\Delta$ with uniformiser $\pi$ and finite residue field $k=\Delta/(\pi)$. Here, the right-hand side is the subring of $\M_{n\times n}(\Delta)$, consisting of those matrices that are upper-triangular modulo $(\pi)$, and the positive integer $n$ is equal to the number of isomorphism classes of simple left $R$-modules. From here on out, we identify $R$ with the right-hand side.

\subsection{Partial zeta functions}

We recall and build upon our framework in Section \ref{section_foundations}.

We have the free commutative monoid $G=G(R)$ on the set $\mathcal{S}=\mathcal{S}(R)$ of isomorphism classes of simple left $R$-modules, together with the norm $\mathscr{N}=\mathscr{N}_R$ on $G$ that takes each $i\in\mathcal{S}$ to the cardinality $|S_i|$ of any representative $S_i\in i$. We write $G$ multiplicatively. Then $(G,\mathscr{N})$ is an arithmetical semigroup, with associated Dirichlet algebra $\Dir(G,\mathscr{N})$. We typically denote this Dirichlet algebra by $\Dir(R)$ to use the left arithmetical ring functoriality of $\Dir$. Here, we want to use the arithmetical semigroup functoriality of $\Dir$, through which the aforementioned functoriality factors.

\begin{remark}\label{remark_finite_products}
   Suppose that we are given finitely many arithmetical semigroups $(G_i,\mathscr{N}_i)$, with corresponding generating sets $\mathcal{S}_i$. Then we may take the \textit{coproduct of arithmetical semigroups} $\coprod_i(G_i,\mathscr{N}_i)=(\coprod_i G_i,\coprod_i\mathscr{N}_i)$, consisting of the free commutative monoid $\coprod_i G_i$ on $\coprod_i \mathcal{S}_i$, together with the norm $\coprod_i\mathscr{N}_i$ that extends uniquely from the $\mathbf{Set}$ coproduct on $\coprod_i \mathcal{S}_i$. We have natural inclusion morphisms $(G_i,\mathscr{N}_i)\hookrightarrow\coprod_i(G_i,\mathscr{N}_i)$ into each constituent cofactor. Then we also have the Dirichlet algebra $\Dir(\coprod_i G_i,\coprod_i\mathscr{N}_i)$ and topological ring embeddings $\Dir(G_i,\mathscr{N}_i)\hookrightarrow\Dir(\coprod_i G_i,\coprod_i\mathscr{N}_i)$ induced by each inclusion morphism. In particular, when we have a finite direct product of left arithmetical rings $R=\prod_i R_i$, the canonical ring homomorphisms $R\to R_i$ induce the topological ring embeddings $\Dir(R_i)\hookrightarrow\Dir(R)$.
\end{remark}

For our application, we take a copy $(H,\mathscr{N})$ of $(G,\mathscr{N})=(G(R),\mathscr{N}_R)$. Then we write $z_i\in G$ and $w_i\in H$ for the images of $i\in\mathcal{S}$. Note that we have the identification $$\Dir(G\coprod H,\mathscr{N}\coprod\mathscr{N})=\mathbb{C}\llbracket z_i,w_j\rrbracket_{(i,j)\in\mathcal{S}\coprod\mathcal{S}}$$ of the associated Dirichlet algebra with a several variable formal power series ring. Moreover, in keeping with Bushnell and Reiner's original notation \cite{Bushnell1987}, we write $z^{\ell(F)}\in G$ and $w^{\ell(F)}\in H$ for the images of $[F]\in G(R)$ when $F$ is a finite length left $R$-module.

Let $J$ be the Jacobson radical of $R$ and $M$ be a finitely generated projective left $R$-module. Then we introduce a new zeta function
$$Z(M;z,w)=\sum_{X\subseteq M} z^{\ell(M/X)}w^{\ell(X/JX)}\in\Dir(G\coprod H,\mathscr{N}\coprod\mathscr{N})$$
where the sum is over all finite colength submodules $X\subseteq M$. Since $R$ is left arithmetical and hereditary and $M$ is finitely generated and projective, every $X$ is finitely generated and projective. Then, since $R$ is semilocal, the isomorphism class of $X$ is precisely determined by the composition factors of $X/JX$, i.e. by $w^{\ell(X/JX)}\in H$. Moreover, because there are only finitely many possibilities for $w^{\ell(X/JX)}$, the zeta function $Z(M;z,w)$ lies in the monoid subring $\Dir(G,\mathscr{N})[H]\subset \Dir(G\coprod H,\mathscr{N}\coprod\mathscr{N})$. For each $h\in H$, we have a natural projection $\Dir(G,\mathscr{N})[H]\to \Dir(G,\mathscr{N})$ given by the ``coefficient of $h$". The images of $Z(M;z,w)$ under these projections will be our partial Bushnell-Reiner zeta functions.

\subsection{Cyclic linearisation}

Let $\Delta$ be a noncommutative discrete valuation ring with uniformiser $\pi$ and finite residue field $k=\Delta/(\pi)\simeq\mathbb{F}_q$. Let $R$ be the subring of $\M_{n\times n}(\Delta)$ consisting of those matrices that are upper-triangular modulo $(\pi)$. Let $\Gamma\subseteq R$ be the subring of diagonal matrices. Let $C_1,\ldots,C_r$ be some columns of $R$, allowing for any number of repetitions but keeping their natural ordering. Let $M$ be the left $R$-submodule of $\M_{n\times r}(\Delta)$ whose columns are $C_1,\ldots,C_r$. Note that any finitely generated projective left $R$-module is isomorphic to one of this form. Then we let $M_1,\ldots,M_n$ be the rows of $M$. It follows that $$M_1\supseteq\cdots \supseteq M_n\supseteq\pi M_1.$$
For each $i\in\{1,2,\ldots,n\}$, we need copies $\Delta_i=\Delta$ and $k_i=k$. Then we have a commutative diagram of ring homomorphisms
\begin{center}
\begin{tikzcd}
&\Delta \arrow[hook]{r}\arrow[two heads]{d} &\Delta_1\times\cdots\times\Delta_n \arrow["\sim"]{r} \arrow[two heads]{d} & \Gamma \arrow[hook]{r} \arrow[two heads]{d} & R \arrow[two heads]{d} \\
&k\arrow[hook]{r} & k_1\times\cdots\times k_n \arrow["\sim"]{r} & \Gamma/\rad(\Gamma) \arrow["\sim"]{r} &  R/\rad(R).
\end{tikzcd}
\end{center}

\subsubsection{Pulling back to \texorpdfstring{$\Gamma$}{}}

Let $\mathcal{X}(\cdot)$ be the lattice of finite colength submodules, and let $\mathcal{S}(\cdot)$ be the set of isomorphism classes of simple left modules. Then we have a canonical lattice embedding
$$\gamma:\mathcal{X}({}_{R} M)\hookrightarrow\mathcal{X}({}_{\Gamma} M)$$
taking any $X\in \mathcal{X}({}_{R} M)$ to ${}_{\Gamma} X$. Moreover, we have the identification $\mathcal{S}(R)=\mathcal{S}(\Gamma)$ via the right-most square of our commutative diagram, so that we have 
$$z^{\ell({}_{R} M/X)}w^{\ell({}_{R} X/JX)}=z^{\ell({}_{\Gamma} M/X)}w^{\ell({}_{\Gamma} X/JX)}\in G\coprod H$$
for every $X\in \mathcal{X}({}_{R} M)$. We now want to find the image of $\gamma$ so that we can work with the formula 
$$Z(M;z,w)=\sum_{X\in\im(\gamma)} z^{\ell({}_{\Gamma} M/X)}w^{\ell({}_{\Gamma} X/JX)}.$$
To this end, let
$$g=\begin{pmatrix}0 & 1 & 0 & \cdots & 0\\ 0 & 0 & 1 & \cdots & 0\\ 0 & 0 & 0 & \cdots & 0\\ \vdots & \vdots & \vdots & \ddots & \vdots\\
0 & 0 & 0 & \cdots & 1\\
\pi & 0 & 0 & \cdots & 0\end{pmatrix}\in R$$ 
be the \textit{standard generator} for the Jacobson radical $J=\rad(R)$. Then we have the internal decomposition 
$$R=\bigoplus_{i=1}^{n}\Gamma g^{i-1}.$$ 
It follows that
$$\im(\gamma)=\{X\in\mathcal{X}({}_{\Gamma} M): gX\subseteq X\},$$
and hence we have the formula
$$Z(M;z,w)=\sum_{\substack{X\subseteq{}_{\Gamma} M\\ gX\subseteq X}} z^{\ell({}_{\Gamma} M/X)}w^{\ell({}_{\Gamma} X/JX)}.$$

\subsubsection{Decomposing over \texorpdfstring{$\Delta_1\times\cdots\times\Delta_n$}{}}

Next we have a lattice isomorphism
\begin{align*}
    \beta:\mathcal{X}({}_\Gamma M)\xrightarrow[]{\sim}\mathcal{X}({}_{\Delta_1} M_1)\times \cdots \times \mathcal{X}({}_{\Delta_n} M_n)
\end{align*}
that takes any finite colength submodule of ${}_\Gamma M$ to the $n$-tuple of its rows. Writing $$\beta(X)=(X_1,\ldots,X_n)\in \mathcal{X}({}_{\Delta_1} M_1)\times \cdots \times \mathcal{X}({}_{\Delta_n} M_n)$$ for any $X\in\mathcal{X}({}_\Gamma M)$, we have $$M/X\cong\bigoplus_{i=1}^n M_i/X_i$$ as left $\Delta_1\times\cdots\times\Delta_n$-modules. 

Let $(X_1,\ldots,X_n)\in \mathcal{X}({}_{\Delta_1} M_1)\times \cdots \times \mathcal{X}({}_{\Delta_n} M_n)$, and let $X$ be the $\Gamma$-submodule of $M$ with rows $X_1,\ldots,X_n$. Then $(X_1,\ldots,X_n)$ is in the image of $\beta\gamma$ if and only if $gX\subseteq X$. We compute that $gX$ is the $\Gamma$-submodule of $M$ with rows $X_2,\ldots,X_n,\pi X_1=X_{n+1}$. So the condition $gX\subseteq X$ is equivalent to $$X_1\supseteq \cdots \supseteq X_n\supseteq\pi X_1=X_{n+1}.$$ In this case, since $JX=gX\subseteq X$, we have
$$X/JX\cong \bigoplus_{i=1}^n X_i/X_{i+1}$$
as left $\Delta_1\times\cdots\times\Delta_n$-modules. 

Hence we have the formula
$$Z(M;z,w)=\sum_{X_*} \prod_{i=1}^n z_i^{\ell({}_{\Delta_i}M_i/X_i)}\cdot \prod_{j=1}^n w_j^{\ell({}_{\Delta_j}X_j/X_{j+1})},$$
where the sum is over all $(X_1,\ldots,X_n)\in\mathcal{X}({}_{\Delta_1}M_1)\times \cdots \times \mathcal{X}({}_{\Delta_n}M_n)$ such that $X_1\supseteq \cdots \supseteq X_n\supseteq\pi X_1=X_{n+1}$.

\subsubsection{Pulling back to \texorpdfstring{$\Delta$}{}}\label{pulling_back_delta}

 At this point, we see that we should first choose $X_1$, and then choose $(X_2,\ldots,X_n)$. Accordingly, we break $\sum_{X_*}$ into a double sum
$$\sum_{X_*}=\sum_{X_1}\sum_{(X_2,\ldots,X_n)}.$$
For compatibility with this double sum, we first write
\begin{align*}
    \prod_{i=1}^n z_i^{\ell(M_i/X_i)}=\prod_{i=1}^n z_i^{-\ell(M_1/M_i)+\ell(M_1/X_1)+\ell(X_1/X_i)}=\left(\prod_{i=1}^n z_i^{-\ell(M_1/M_i)}\right)\left(\prod_{i=1}^n z_i\right)^{\ell(M_1/X_1)}\left(\prod_{i=1}^n z_i^{\ell(X_1/X_i)}\right).
\end{align*}
And then we write
\begin{align*}
    \prod_{i=1}^n z_i^{\ell(X_1/X_i)}=\prod_{i=1}^n \prod_{j=1}^{i-1}z_i^{\ell(X_j/X_{j+1})}=\prod_{j=1}^{n-1}\left(\prod_{i=j+1}^{n}z_i\right)^{\ell(X_j/X_{j+1})}=\prod_{j=1}^{n}\left(\prod_{i=j+1}^{n}z_i\right)^{\ell(X_j/X_{j+1})},
\end{align*}
where, in the last equality, we take advantage of an empty product over $i$ when $j=n$.

Let
$$t_j=w_j\prod_{i=j+1}^n z_i, \qquad u=\prod_{i=1}^n z_i^{\ell(M_1/M_i)}, \qquad v=\prod_{i=1}^n z_i$$
for each $j\in\{1,\ldots,n\}$. Then we have
$$uZ(M;z,w)=\sum_{X_1}v^{\ell(M_1/X_1)}\sum_{(X_2,\ldots,X_n)}\prod_{j=1}^n t_j^{\ell(X_j/X_{j+1})},$$
where the first sum is over all $X_1\in\mathcal{X}(M_1)$, and the second sum is over all $(X_2,\ldots,X_n)\in\mathcal{X}(M_2)\times \cdots \times \mathcal{X}(M_n)$ such that $X_1\supseteq X_2\supseteq \cdots \supseteq X_n\supseteq\pi X_1=X_{n+1}$.

\subsection{Polynomials attached to finite filtered vector spaces}

Suppose that we are given a finite-dimensional filtered vector space $V_*:V=V_1\supseteq\cdots \supseteq V_{n+1}=0$ over $k$. Then we define the polynomial
$$P(V_*;t)=\sum_{W_*} \prod_{j=1}^n t_j^{\dim(W_j/W_{j+1})},$$
where the sum is over all tuples of subspaces $W_*=(W_1,\ldots,W_{n+1})$ of $V$ satisfying 
\begin{center}
\begin{tikzcd}
& V_1 \arrow[symbol=\supseteq]{r} & V_2 \arrow[symbol=\supseteq]{r} & \cdots  \arrow[symbol=\supseteq]{r} &   V_n  \arrow[symbol=\supseteq]{r} & V_{n+1}\\
& W_1 \arrow[symbol=\supseteq]{r} \arrow[equal]{u} & W_2 \arrow[symbol=\supseteq]{r} \arrow[symbol=\subseteq]{u} & \cdots  \arrow[symbol=\supseteq]{r} &   W_n  \arrow[symbol=\supseteq]{r}  \arrow[symbol=\subseteq]{u} & W_{n+1} \arrow[equal]{u}.
\end{tikzcd}
\end{center}
Note the sum is really only over $(W_2,\ldots,W_n)$. If $U_*$ is a filtered vector space over $k$ that is isomorphic to $V_*$, then we easily verify that
$$P(U_*;t)=P(V_*;t).$$

Now let's prepare the data for our application. For each $X\in\mathcal{X}(M_1)$, let 
$$M_j(X)=(X\cap M_j)/\pi X$$
for each $j\in\{1,\ldots,n\}$ and consider the filtered vector space $M_*X:M_1(X)\supseteq \cdots \supseteq M_n(X)\supseteq M_{n+1}(X)=0$.
Then we have
\begin{align}\label{first_application_filtered}
    uZ(M;z,w)=\sum_{X_1}v^{\ell(M_1/X_1)}P(M_* X_1;t).
\end{align}

\begin{proposition}
    Let $X_1\in\mathcal{X}(M_1)$, and let $Y_1=X_1+\pi M_1$. Then $M_* X_1\cong M_* Y_1$ as filtered vector spaces.
\end{proposition}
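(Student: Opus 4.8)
The plan is to reduce the claim to an equality of dimension vectors. Recall that a finite-dimensional filtered vector space $V_*:V=V_1\supseteq\cdots\supseteq V_{n+1}=0$ over $k$ is determined up to isomorphism by the integers $\dim_k V_j$, since one may choose a $k$-basis of $V$ adapted to the flag. Both $M_*X_1$ and $M_*Y_1$ are of this type, so it suffices to show that
\[
\dim_k M_j(X_1)=\dim_k M_j(Y_1)
\]
for every $j\in\{1,\dots,n+1\}$. The cases $j=1$ and $j=n+1$ are immediate, so the content is in $2\le j\le n$. (One should not expect the inclusion $X_1\hookrightarrow Y_1$ to induce the isomorphism: already for $X_1=\pi^2M_1$ and $Y_1=\pi M_1$ it induces the zero map on the top graded pieces, even though the two filtered spaces are still abstractly isomorphic.)

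First I would record a length formula. Fix a finite colength $\Delta$-submodule $Z\subseteq M_1$. Then $\pi Z\subseteq Z\cap M_j$, because $\pi Z\subseteq\pi M_1\subseteq M_n\subseteq M_j$ and $\pi Z\subseteq Z$; hence $M_j(Z)=(Z\cap M_j)/\pi Z$ really is a $k$-subspace of $Z/\pi Z$, and, since it is killed by $\pi$,
\[
\dim_k M_j(Z)=\ell_\Delta(M_1/\pi Z)-\ell_\Delta\bigl(M_1/(Z\cap M_j)\bigr).
\]
Since $\pi$ is a regular normal element, $\ell_\Delta(\pi M_1/\pi Z)=\ell_\Delta(M_1/Z)$, so $\ell_\Delta(M_1/\pi Z)=\ell_\Delta(M_1/\pi M_1)+\ell_\Delta(M_1/Z)$. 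Writing $c=\ell_\Delta(M_1/\pi M_1)$, a finite constant since ${}_\Delta M_1$ is finitely generated (in fact $M_1\cong\Delta^r$ and $c=r$), and using $Z\cap M_j\subseteq Z\subseteq M_1$ to rewrite $\ell_\Delta(M_1/Z)-\ell_\Delta(M_1/(Z\cap M_j))=-\ell_\Delta\bigl(Z/(Z\cap M_j)\bigr)$, we obtain
\[
\dim_k M_j(Z)=c-\ell_\Delta\bigl(Z/(Z\cap M_j)\bigr).
\]

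Finally I would apply this to $Z=X_1$ and $Z=Y_1$. The desired equality $\dim_k M_j(X_1)=\dim_k M_j(Y_1)$ now amounts to
\[
\ell_\Delta\bigl(X_1/(X_1\cap M_j)\bigr)=\ell_\Delta\bigl(Y_1/(Y_1\cap M_j)\bigr).
\]
By the second isomorphism theorem, $X_1/(X_1\cap M_j)\cong(X_1+M_j)/M_j$ and $Y_1/(Y_1\cap M_j)\cong(Y_1+M_j)/M_j$, and the two right-hand sides coincide because $X_1+M_j=X_1+\pi M_1+M_j=Y_1+M_j$, using $\pi M_1\subseteq M_n\subseteq M_j$. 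This settles the proposition. The only real care needed is that the ambient modules $M_1,Z,Z\cap M_j$ have infinite length individually, so one must phrase everything through the relative lengths $\ell_\Delta(M_1/-)$; beyond that bookkeeping there is no serious obstacle.
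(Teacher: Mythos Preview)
Your proof is correct and follows essentially the same strategy as the paper's: reduce the filtered isomorphism to a dimension count and then use $\pi M_1\subseteq M_j$ together with the second isomorphism theorem. The only organizational difference is that the paper compares the successive quotients $M_j(X_1)/M_{j+1}(X_1)\cong ((X_1\cap M_j)+M_{j+1})/M_{j+1}$, whereas you compare $\dim_k M_j(X_1)$ directly via the length formula and land at $(X_1+M_j)/M_j$; both identities collapse by the same inclusion $\pi M_1\subseteq M_j$.
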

\begin{proof}
Since $X_1,Y_1\in \mathcal{X}(M_1)$, we have $M_1(X_1)=X_1/\pi X_1\simeq Y_1/\pi Y_1=M_1(Y_1)$ as vector spaces. Therefore it now suffices to show that, for each $j\in\{1,\ldots,n-1\}$, we have $M_j(X_1)/M_{j+1}(X_1)\simeq M_j(Y_1)/M_{j+1}(Y_1)$. To this end, we let $j\in\{1,\ldots,n-1\}$ and note that $M_j\supseteq M_{j+1}\supseteq\pi M_1$. Then we compute that 
$$M_j(X_1)/M_{j+1}(X_1)\cong (X_1\cap M_j)/(X_1\cap M_{j+1})=(X_1\cap M_j)/((X_1\cap M_j)\cap M_{j+1})\cong ((X_1\cap M_j)+M_{j+1})/M_{j+1}.$$
Similarly, we have
$$M_j(Y_1)/M_{j+1}(Y_1)\cong ((Y_1\cap M_j)+M_{j+1})/M_{j+1}.$$
Then we compute that
$$(Y_1\cap M_j)+M_{j+1}=((X_1+\pi M_1)\cap M_j)+M_{j+1}=((X_1\cap M_j)+\pi M_1)+M_{j+1}=(X_1\cap M_j)+M_{j+1}.$$
Hence $M_j(X_1)/M_{j+1}(X_1)\cong M_j(Y_1)/M_{j+1}(Y_1)$, as desired.
\end{proof}

To make use of the above proposition, we decompose the sum on the right-hand side of (\ref{first_application_filtered}). Indeed, by letting $Y_1$ range over submodules of $M_1$ that contain $\pi M_1$, we have
\begin{align*}
    uZ(M;z,w)&=\sum_{Y_1}\sum_{\substack{X_1\\ X_1+\pi M_1=Y_1}}v^{\ell(M_1/X_1)}P(M_* X_1;t)\\
    &=\sum_{Y_1}\sum_{\substack{X_1\\ X_1+\pi M_1=Y_1}}v^{\ell(M_1/X_1)}P(M_* Y_1;t)\\
    &=\sum_{Y_1}P(M_* Y_1;t)\sum_{\substack{X_1\\ X_1+\pi M_1=Y_1}}v^{\ell(M_1/X_1)}.
\end{align*}

\subsection{Reduced Hermite normal form}

Now we want to find, for each $Y_1$, a polynomial $Q(Y_1;v)\in\mathbb{Z}[v]$ satisfying 
$$\sum_{\substack{X_1\\ X_1+\pi M_1=Y_1}}v^{\ell({}_\Delta M_1/X_1)}=Q(Y_1;v)\cdot Z(M_1;v).$$
To this end, fix a $Y_1$. Let $Y'$ be the left ideal of $\Lambda=\M_r(\Delta)$ whose rows lie in $Y_1$. By Morita theory, the above sum is equal to
\begin{align}\label{reduced_hey_zeta}
    \sum_{\substack{X'\\ X'+\pi \Lambda=Y'}}v^{\ell({}_{\Lambda}\Lambda/X')},
\end{align}
where $X'$ ranges over finite index left ideals of $\Lambda$. Let $D$ be the classical division ring of fractions of $\Delta$, and let $A=\M_r(D)$. Recall that we may use Hermite normal forms to identify the orbit space $\Lambda^\times\backslash(\Lambda\cap A^\times)$ with a complete set of representatives in $\Lambda\cap A^\times$. Indeed, these representatives take the form
\begin{align}\label{hermite_normal_form}
    \begin{pmatrix}
    \pi^{\ell_1} & a_{1,2} & a_{1,3} & \cdots & a_{1,r}\\
    0 & \pi^{\ell_2} & a_{2,3} & \cdots & a_{2,r}\\
    0 & 0 & \pi^{\ell_3} & \cdots & a_{3,r}\\
    \vdots & \vdots & \vdots & \ddots & \vdots\\
    0 & 0 & 0 & \cdots & \pi^{\ell_r}
\end{pmatrix},
\end{align}
where each $\ell_i$ is a nonnegative integer and each $a_{i,j}$ lies in a complete set of representatives of $\Delta/(\pi^{\ell_j})$. Under this identification, $x\mapsto \Lambda x$ defines a bijection $\Lambda^\times \backslash(\Lambda\cap A^\times)\to\mathcal{X}({}_{\Lambda}\Lambda)$. So our goal is now to understand how the condition $X'+\pi \Lambda=Y'$ pulls back through this bijection.

Let $\bar{\Lambda}=\M_r(k)$. Note that $Y'$ determines a unique orbit in $\bar{\Lambda}^\times\backslash \bar{\Lambda}$ represented by $\bar{y}\in \bar{\Lambda}$, say. Consider the canonical ring homomorphism $\Lambda\twoheadrightarrow \bar{\Lambda}$ given by entrywise reduction mod $\pi\Delta$. Take any lift $y\in \Lambda$ of $\bar{y}\in \bar{\Lambda}$. On groups of units, the map $\Lambda^\times\twoheadrightarrow \bar{\Lambda}^\times$ is still surjective. Therefore $\bar{\Lambda}^\times\backslash \bar{\Lambda}=\Lambda^\times\backslash\bar{\Lambda}$. Now observe that we have the composition of canonical left $\Lambda^\times$-equivariant maps
$$\Lambda\cap A^\times\hookrightarrow \Lambda\twoheadrightarrow \bar{\Lambda}.$$
The orbit $\Lambda^\times\bar{y}\subseteq \bar{\Lambda}$ first pulls back to $\Lambda^\times y+\pi \Lambda=\Lambda^\times(y+\pi \Lambda)\subseteq \Lambda$, which then pulls back to $(\Lambda^\times y+\pi\Lambda)\cap A^\times\subseteq \Lambda\cap A^\times$. Write
$$\mathcal{O}(x)=(\Lambda^\times x+\pi\Lambda)\cap A^\times$$
for any $x\in\Lambda$. Hence sum (\ref{reduced_hey_zeta}) becomes
\begin{align}\label{orbit_hey}
    \sum_{x\in \Lambda^\times\backslash \mathcal{O}(y)}v^{\ell({}_{\Lambda}\Lambda/\Lambda x)},
\end{align}
where we identify $\Lambda^\times\backslash \mathcal{O}(y)$ with a complete set of representatives in $\mathcal{O}(y)$. 

\begin{proposition}
For any $a,b\in\Lambda^\times$ and $c\in\pi\Lambda$, we have
\begin{align*}
    \sum_{x\in \Lambda^\times\backslash \mathcal{O}(y)}v^{\ell({}_{\Lambda}\Lambda/\Lambda x)}=\sum_{x\in \Lambda^\times\backslash \mathcal{O}(ayb+c)}v^{\ell({}_{\Lambda}\Lambda/\Lambda x)}.
\end{align*}
\end{proposition}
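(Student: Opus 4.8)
The goal is to show that the sum $\sum_{x\in \Lambda^\times\backslash \mathcal{O}(y)}v^{\ell({}_{\Lambda}\Lambda/\Lambda x)}$ is unchanged when $y$ is replaced by $ayb+c$ with $a,b\in\Lambda^\times$ and $c\in\pi\Lambda$. Since $\mathcal{O}(x)$ is by definition $(\Lambda^\times x+\pi\Lambda)\cap A^\times$, the set $\mathcal{O}(x)$ depends only on the coset $\Lambda^\times\bar{x}\subseteq\bar\Lambda$ of the reduction $\bar{x}\in\bar\Lambda=\M_r(k)$; in particular $\mathcal{O}(y+c)=\mathcal{O}(y)$ for $c\in\pi\Lambda$ and $\mathcal{O}(ay)=a\mathcal{O}(y)$ for $a\in\Lambda^\times$ (because left multiplication by $a$ permutes $\Lambda^\times$ and $\pi\Lambda$ and commutes with intersecting $A^\times$). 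So I would first dispatch the $a$ and $c$ contributions, leaving only the right-multiplication by $b$ to handle; that is, it suffices to prove $\sum_{x\in\Lambda^\times\backslash\mathcal{O}(y)}v^{\ell(\Lambda/\Lambda x)}=\sum_{x\in\Lambda^\times\backslash\mathcal{O}(yb)}v^{\ell(\Lambda/\Lambda x)}$ for $b\in\Lambda^\times$.

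For the right-multiplication step, the natural move is the bijection $x\mapsto xb$ from $\mathcal{O}(y)$ to $\mathcal{O}(yb)$. One checks $\mathcal{O}(y)b=(\Lambda^\times y+\pi\Lambda)b\cap A^\times=(\Lambda^\times yb+\pi\Lambda)\cap A^\times=\mathcal{O}(yb)$, using that $(\pi\Lambda)b=\pi\Lambda$ (as $b$ is a unit) and that right multiplication by the $A^\times$-element $b$ preserves $A^\times$; crucially this map is left $\Lambda^\times$-equivariant, so it descends to a bijection $\Lambda^\times\backslash\mathcal{O}(y)\xrightarrow{\sim}\Lambda^\times\backslash\mathcal{O}(yb)$ on orbit sets. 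The only remaining point is that the exponent is preserved: $\ell({}_\Lambda\Lambda/\Lambda x)=\ell({}_\Lambda\Lambda/\Lambda xb)$. This holds because right multiplication by $b\in\Lambda^\times$ is an automorphism of the right regular module, hence carries the left ideal $\Lambda x$ isomorphically (as a left $\Lambda$-module) onto $\Lambda xb$, so the quotients $\Lambda/\Lambda x$ and $\Lambda/\Lambda xb$ are isomorphic as left $\Lambda$-modules and a fortiori have the same length.

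I expect the main (mild) obstacle to be purely bookkeeping: making sure the manipulations $(\Lambda^\times y+\pi\Lambda)b=\Lambda^\times yb+\pi\Lambda$ and $a(\Lambda^\times y+\pi\Lambda)=\Lambda^\times ay+\pi\Lambda$ are justified, i.e. that multiplication on either side by a fixed unit is an additive bijection stabilising $\pi\Lambda$, and that it commutes with intersecting against $A^\times$ (here one uses $a,b\in\Lambda^\times\subseteq A^\times$, and $\Lambda^\times$ normalises nothing unexpected — it is genuinely just left/right translation on the ambient ring $\Lambda$ and the ambient ring $A$). There is no real deformation-theoretic or combinatorial content; the statement is essentially that the orbit set $\Lambda^\times\backslash\mathcal{O}(y)$, together with the length function on it, is insensitive to the choice of lift $y$ of $\bar y$ and to modifying $\bar y$ within its $\Lambda^\times$-orbit on the left or its $\bar\Lambda^\times$-orbit on the right. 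I would present it by (1) reducing to the case $y\mapsto yb$, (2) exhibiting the equivariant bijection $x\mapsto xb$, (3) noting the length is invariant, and (4) concluding by reindexing the sum.
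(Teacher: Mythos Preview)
Your proposal is correct and follows essentially the same route as the paper: reduce to the case $a=1$, $c=0$ by observing $\mathcal{O}(y)=\mathcal{O}(ay)=\mathcal{O}(y+c)$, then use the left $\Lambda^\times$-equivariant bijection $x\mapsto xb$ from $\mathcal{O}(y)$ to $\mathcal{O}(yb)$ together with $\Lambda/\Lambda xb\cong\Lambda/\Lambda x$ as left $\Lambda$-modules. One small terminological slip: right multiplication by $b$ is an automorphism of the \emph{left} regular module ${}_\Lambda\Lambda$, not the right regular module (right multiplication does not commute with the right action unless $b$ is central); your subsequent conclusion about left ideals and left-module quotients is nonetheless correct.
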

\begin{proof}
    From the formula $\mathcal{O}(y)=\left(\Lambda^\times y+\pi\Lambda\right)\cap A^\times$, we see that $\mathcal{O}(y)=\mathcal{O}(ay)$ and $\mathcal{O}(y)=\mathcal{O}(y+c)$ for any $y\in\Lambda$. So it suffices to check the proposition with $a=1$, $c=0$ and $b$ arbitrary. To see the claim in this more limited case, first note that right multiplication by $b$ defines a $\Lambda^\times$-equivariant automorphism of $\Lambda$ that takes $\mathcal{O}(y)$ to $\mathcal{O}(yb)$ for any $y\in\Lambda$. Then we check that $\Lambda/(\Lambda xb)\cong (\Lambda b^{-1})/(\Lambda x)=\Lambda/\Lambda x$, as left $\Lambda$-modules, for any $x\in\Lambda$.
\end{proof}

Hence, in dealing with sum (\ref{orbit_hey}), we may assume that $y=\diag(1,\ldots,1,0,\ldots,0)\in \Lambda$ with $m\le r$ ones, say. Therefore the Hermite normal forms (3) representing the orbits in $\Lambda^\times\backslash \mathcal{O}(y)$ are precisely those with $\ell_1=\cdots=\ell_m=0$ and all other entries in $\pi\Delta$. Then we compute that 
\begin{align*}
    \sum_{x\in \Lambda^\times\backslash \mathcal{O}(y)}v^{\ell({}_{\Lambda}\Lambda/\Lambda x)}&=\prod_{j=m+1}^r \sum_{\ell_j=1}^\infty q^{(j-1)(\ell_j-1)}v^{\ell_j}\\
    &=\prod_{j=m+1}^r \frac{v}{1-q^{j-1}v}\\
    &= v^{r-m}\prod_{i=1}^m (1-q^{i-1}v)\cdot \prod_{j=1}^r \frac{1}{1-q^{j-1}v}.
\end{align*}
So we take (see \ref{solomon_hey})
$$Q(Y_1;v)=v^{r-m}\prod_{i=1}^m (1-q^{i-1}v).$$

\subsection{Bushnell-Reiner-Solomon theorem}\label{BRS_factor}
We have
\begin{align*}
    uZ(M;z,w)&=\sum_{Y_1} P(M_* Y_1;t) \cdot Q(Y_1;v) \cdot Z( M_1;v)\\
    &= Z(M_1;v)\sum_{Y_1} P(M_* Y_1;t) \cdot Q(Y_1;v),
\end{align*}
where $P$ and $Q$ are both integral polynomials and $\sum_{Y_1}$ is a finite sum. 

Now let 
$$F(M;z,w)=\frac1{u}\sum_{Y_1} P(M_* Y_1;t)\cdot Q(Y_1;v).$$
\textit{A priori}, $F$ is a Laurent polynomial. But, looking back at Section \ref{pulling_back_delta}, we confirm that $F$ is a polynomial. We can also see this by comparing coefficients in the equation
\begin{align}\label{brs_thm}
    Z(M;z,w)=Z(M_1;v)\cdot F(M;z,w).
\end{align}
We refer to this equation as a Bushnell-Reiner-Solomon theorem, in reference to Bushnell and Reiner's proof of Solomon's first conjecture (c.f.~\cite{Sol_79_second,Bushnell1980,Bushnell1987}). The classical interpretation of Solomon's first conjecture is somewhat different. The following interpretation is more in keeping with the classical one. 

On the one hand, we have $M_1/\pi M_1\simeq k^{\oplus r}$. Therefore,
\begin{align}\label{solomon_hey}
    Z(M_1;v)=\prod_{j=0}^{r-1}(1-q^j v)^{-1}
\end{align}
by our abstract Hey formula (Section \ref{section_hey}). On the other hand, recall that $R$ is an order in a simple Artinian ring $A$. Then we see that $A\otimes_R M$ has length $r$ as a left $A$-module. Hence, the factor $Z(M_1;v)$ remains unchanged if we replace $M$ with another finitely generated projective left $R$-module $M'$ such that $\length_A(A\otimes_R M')=r$.

\begin{remark}
    Bushnell and Reiner proved Solomon's first conjecture in \cite[\S.~4.1, pp.~147--150]{Bushnell1980}. Their proof reduces the requisite comparison problem to computing a particular $p$-adic integral, in the second display equation on \cite[p.~150]{Bushnell1980}. When written as a sum over obrits, their integral looks just like our sum in (\ref{orbit_hey}). After a diagonalisation argument (like Bushnell and Reiner used earlier in their proof), we end up with the same sum as Bushnell and Reiner.
\end{remark}

\printbibliography

\end{document}